\newtheorem{theorem}{Theorem}[section]
\newtheorem{definition}[theorem]{Definition}
\newtheorem{lemma}[theorem]{Lemma}
\newtheorem{globalClaim}{Claim}[subsection]
\newtheorem{question}[theorem]{Question}
\newtheorem{fact}[theorem]{Fact}
\newtheorem{remark}[theorem]{Remark}
\newtheorem*{questionstar}{Question}
\begin{document}
\title[A variant of Shelah's characterization of Strong Chang's Conjecture]{A variant of Shelah's characterization of Strong Chang's Conjecture}

\author{Sean Cox}
\email{scox9@vcu.edu}
\address{
Department of Mathematics and Applied Mathematics \\
Virginia Commonwealth University \\
1015 Floyd Avenue \\
Richmond, Virginia 23284, USA 
}

\author{Hiroshi Sakai}
\address{
Graduate School of System Informatics \\
Kobe University \\
1-1 Rokkodai \\ Nada \\ Kobe 657-8501 \\ Japan
}

\thanks{Part of this work was done during the \emph{Ideal Fest} workshop at VCU in May 2017.  The authors gratefully acknowledge support from the VCU Presidential Research Quest Fund, and Simons Foundation grant 318467.}

\subjclass[2010]{03E05, 03E55,  03E35, 03E65
}

\begin{abstract}
Shelah~\cite{MR1623206} considered a certain version of Strong Chang's Conjecture, which we denote $\text{SCC}^{\text{cof}}$, and proved that it is equivalent to several statements, including the assertion that Namba forcing is semiproper.  We introduce an apparently weaker version, denoted $\text{SCC}^{\text{split}}$, and prove an analogous characterization of it.  In particular, $\text{SCC}^{\text{split}}$ is equivalent to the assertion that the the Friedman-Krueger poset is semiproper.  This strengthens and sharpens the results of Cox~\cite{Cox_Nonreasonable}, and sheds some light on problems from Usuba~\cite{MR3248209} and Torres-Perez and Wu~\cite{MR3431031}.
\end{abstract}

\maketitle

\section{Introduction}

Foreman-Magidor-Shelah~\cite{MR924672} considered a strong version of Chang's Conjecture,\footnote{Their version asserts that for every stationary $S \subseteq \omega_1$ and every $F:[\omega_2]^{<\omega} \to \omega_2$, there exists an $X \subset \omega_2$ such that $X$ is closed under $F$, $|X|=\omega_1$, and $X \cap \omega_1 \in S$.} which they used to show that, under Martin's Maximum, the saturation of the nonstationary ideal on $\omega_1$ cannot be destroyed by c.c.c.\ forcing.  Their version can also be used to prove stronger saturation properties of the nonstationary ideal (see the recent Dow-Tall~\cite{MR3744886}, Lemma 3.11).  Todorcevic~\cite{MR1261218} considered a strictly stronger version of Chang's Conjecture, which we denote SCC, and proved that SCC implies $\text{WRP}([\omega_2]^\omega)$, which means that every stationary subset of $[\omega_2]^\omega$ reflects to some ordinal of size $\omega_1$.\footnote{SCC is strictly stronger than the version from Foreman-Magidor-Shelah~\cite{MR924672}, since the version from the latter is preserved by adding a Cohen real, whereas SCC is not; see Todorcevic~\cite{MR1261218}. }  Shelah~\cite{MR1623206} considered an apparently stronger version, which we denote $\text{SCC}^{\text{cof}}$, and proved the following interesting characterization of it (see Section \ref{sec_Prelims} for the definition of $\text{SCC}^{\text{cof}}$ and other terms):

\begin{theorem}[Shelah]\label{thm_Shelah}
The following are equivalent:
\begin{enumerate}[label=(\alph*)]
 \item $\text{SCC}^{\text{cof}}$
 \item Namba forcing is semiproper.
 \item There exists some semiproper forcing that forces $\omega_2^V$ to be $\omega$-cofinal.
 \item\label{item_ShelahGame} Player II has a winning strategy in the following game of length $\omega$.  Player I plays $F_n: \omega_2 \to \omega_1$, player II responds by an ordinal $\delta_n < \omega_1$.  Player II wins iff, letting $\delta_\omega:= \text{sup}_n \delta_n$, there are cofinally many $\alpha < \omega_2$ such that $\forall n \in \omega \ F_n(\alpha) < \delta_\omega$.  
 
 We will denote this game $\boldsymbol{\mathcal{G}}^{\textbf{cof}}$.
 \item  For every Skolemized structure $\mathfrak{A}$ in a countable language extending \[(H_{\omega_3},\in), \]   the particular strategy, where II plays $\omega_1 \cap \text{Hull}^{\mathfrak{A}}(F_0,F_1, \dots, F_n)$ in the game $\mathcal{G}^{\text{cof}}$ described in part \ref{item_ShelahGame},\footnote{$\text{Hull}^{\mathfrak{A}}(X)$ denotes the Skolem hull of $X$ in the structure $\mathfrak{A}$. } is a winning strategy for player II in that game. 
\end{enumerate}
\end{theorem}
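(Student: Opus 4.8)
The plan is to prove $(a)\Leftrightarrow(d)\Leftrightarrow(e)$ essentially by unwinding the definition of $\text{SCC}^{\text{cof}}$ from Section~\ref{sec_Prelims}, and then to close the loop $(d)\Rightarrow(b)\Rightarrow(c)\Rightarrow(d)$. Two links are cheap. The implication $(e)\Rightarrow(d)$ is immediate: exhibiting one winning strategy witnesses that II has a winning strategy. The implication $(b)\Rightarrow(c)$ just records that Namba forcing preserves $\omega_1$ (a ZFC theorem) and adds a cofinal $\omega$-sequence to $\omega_2^V$ — given a Namba condition $T$ and $\beta<\omega_2$, the $\aleph_2$-many immediate successors of $\operatorname{stem}(T)$ exceeding $\beta$ still form a condition — so, being semiproper by hypothesis, it witnesses $(c)$. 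I will freely use the standard robustness of $\text{SCC}^{\text{cof}}$ (independence of the choice of $H_\theta$, and the equivalence of its ``for all countable $M$'' and ``for stationarily many countable $M$'' forms).

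The equivalences $(a)\Leftrightarrow(d)\Leftrightarrow(e)$ come from the usual dictionary between countable elementary submodels and plays of $\mathcal{G}^{\text{cof}}$; I spell out the representative direction $(a)\Rightarrow(e)$. Fix a Skolemized structure $\mathfrak{A}$ extending $(H_{\omega_3},\in)$ and a play $\langle F_n:n\in\omega\rangle$ of player I, and let $N:=\operatorname{Hull}^{\mathfrak{A}}(\{F_n:n\in\omega\})$, so that $\delta_\omega=N\cap\omega_1$ is precisely the limit of the moves prescribed by the strategy in $(e)$. Given any $\gamma<\omega_2$, apply $\text{SCC}^{\text{cof}}$ to $\mathfrak{A}$, $N$, $\gamma$ to obtain a countable $N'\prec\mathfrak{A}$ with $N\subseteq N'$, $N'\cap\omega_1=N\cap\omega_1$, and $\sup(N'\cap\omega_2)>\gamma$; fixing $\alpha\in N'\cap\omega_2$ above $\gamma$, we get $F_n(\alpha)\in N'\cap\omega_1=\delta_\omega$ for every $n$, since $F_n\in N\subseteq N'$ and $\alpha\in N'$. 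Thus cofinally many $\alpha$ witness the winning condition, so the strategy wins. The remaining directions among $(a),(d),(e)$ follow the same correspondence in reverse and I suppress them.

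For $(c)\Rightarrow(d)$: suppose $\mathbb P$ is semiproper and $p^*\Vdash_{\mathbb P}\operatorname{cf}(\check\omega_2)=\omega$; fix a $\mathbb P$-name $\dot c$ for an increasing cofinal map $\omega\to\omega_2^V$ below $p^*$, a large regular $\theta$, and the club $C$ of countable $M\prec(H_\theta,\in,<,\mathbb P,\dot c,p^*)$ for which every $p\in\mathbb P\cap M$ has an $(M,\mathbb P)$-semigeneric extension. Player II's strategy is: on seeing $F_0,\dots,F_n$, extend the previously chosen models by a canonically chosen $M_n\in C$ with ($M_{n-1}\cup$)$\{F_n\}\subseteq M_n$, and play $\delta_n:=M_n\cap\omega_1$; note this is computed entirely in $V$. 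To verify it wins a play with limit model $M=\bigcup_n M_n\in C$ and $\delta_\omega=M\cap\omega_1$, pick an $(M,\mathbb P)$-semigeneric $q^*\le p^*$ and a generic $G\ni q^*$; in $V[G]$ one has $M[G]\cap\omega_1=M\cap\omega_1$ and $\operatorname{ran}(\dot c^G)\subseteq M[G]$ cofinal in $\omega_2^V$, and since each $F_n\in M\subseteq M[G]$, every $\alpha\in\operatorname{ran}(\dot c^G)$ satisfies $F_n(\alpha)<\delta_\omega$ for all $n$ (the value $F_n(\alpha)$ is the interpretation of a name in $M$ for a countable ordinal, so semigenericity pins it below $\delta_\omega$). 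Hence $S:=\{\alpha<\omega_2 : \forall n\,F_n(\alpha)<\delta_\omega\}$, which is defined from parameters of $V$, has $\sup S=\omega_2^V$ in $V[G]$; as this supremum is absolute, $S$ is cofinal in $\omega_2$ already in $V$, so II wins.

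The main obstacle is $(d)\Rightarrow(b)$: turning a winning strategy $\sigma$ for II into a proof that Namba forcing $\operatorname{Nm}$ is semiproper. Given a large $\theta$, a countable $M\prec(H_\theta,\in,<,\operatorname{Nm},\sigma)$, and $p\in\operatorname{Nm}\cap M$, I would build — inside $M$, with all choices made canonically via $<$, so that every object produced lands in $M$ — a decreasing fusion sequence of Namba conditions $p=T_0\ge T_1\ge\cdots$ together with a play of $\mathcal{G}^{\text{cof}}$ whose $n$-th move $F_n:\omega_2\to\omega_1$ records, for each immediate successor $\xi$ of each splitting node of $T_n$ under consideration at stage $n$ (under a bookkeeping through the countably many $\operatorname{Nm}$-names in $M$ for ordinals below $\omega_1$), the ordinal to which the queried name is forced below $\xi$. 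Since $\sigma,F_0,\dots,F_n\in M$, the reply $\delta_n=\sigma(F_0,\dots,F_n)$ lies in $M\cap\omega_1$, so $\delta_\omega=\sup_n\delta_n\le\sup(M\cap\omega_1)$; and $\sigma$ winning means that at each splitting node there are $\aleph_2$-many (cofinally many) successors $\xi$ with the queried value below $\delta_\omega$. Keeping exactly those successors is what lets the fused condition $q=\bigcap_n T_n$ remain a Namba tree (still $\aleph_2$-branching above its stem) while forcing every name in $M$ for a countable ordinal below $\delta_\omega\le\sup(M\cap\omega_1)$, i.e.\ $q$ is $(M,\operatorname{Nm})$-semigeneric and $q\le p$. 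The real work — the place I expect to spend most effort — is the fusion bookkeeping: organizing the $F_n$'s and the splitting nodes so that the intersection $q$ is genuinely a condition (the notorious delicacy of Namba fusion), so that all countably many relevant names are addressed, and so that each $F_n$ remains definable inside $M$.
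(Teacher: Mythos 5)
You should note at the outset that the paper does not prove Theorem \ref{thm_Shelah} at all (it is quoted from Shelah), so the only in-paper comparison is with the proof of the analogous Theorem \ref{thm_CoxSakai}, which routes the forcing clauses through clause (a) rather than through the game clause. Within your scheme, the cheap arrows and the two directions you actually spell out are fine: (e)$\Rightarrow$(d) and (b)$\Rightarrow$(c) are as easy as you say, (a)$\Rightarrow$(e) is correct modulo the structure-robustness of $\text{SCC}^{\text{cof}}$ that you invoke (the analogue of Lemma \ref{lem_FailsOnStatSet}), and your (c)$\Rightarrow$(d) argument --- semimaster condition over the union model, $\operatorname{ran}(\dot c^G)\subseteq M[G]$, $F_n(\alpha)\in M[G]\cap\omega_1=M\cap\omega_1=\delta_\omega$, absoluteness of unboundedness of a ground-model set --- is a correct and clean proof of that arrow.

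The genuine gap is in (d)$\Rightarrow$(b), and secondarily in the suppressed (d)$\Rightarrow$(a). A \emph{single} run of $\mathcal{G}^{\text{cof}}$ hands you countably many unary functions $F_n:\omega_2\to\omega_1$ and one bound $\delta_\omega$, but the fused Namba condition $q$ has $\aleph_2$-many nodes above its stem, and at \emph{each} node $s=(\xi_1,\dots,\xi_k)$ you must know that $\aleph_2$-many immediate successors $\xi$ admit an extension deciding the current name below $\delta_\omega$. The nodes of $q$ at levels $\ge 2$ are not even determined until after the play is over (which successors you keep depends on $\delta_\omega$), and in any case there are far too many of them for your bookkeeping ``one queried (node, name) pair per stage'' to address; moreover unary functions cannot control $\omega_1\cap\text{Hull}(M\cup\{\xi_1,\dots,\xi_k\})$ for $k\ge 2$, which is exactly what must stay below $\delta_\omega$ along every branch. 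So the ``fusion bookkeeping'' you defer is not a technical delicacy --- it is the missing idea. The standard repair is the one mirrored in the paper's proof of Theorem \ref{thm_CoxSakai}: first prove (d)$\Rightarrow$(a) --- which in your plan is the \emph{only} arrow back into (a) and is not ``the same correspondence in reverse'': as in Section \ref{sec_FromWS} you must take $M$ elementary in the structure expanded by the strategy $\sigma$, have I play an enumeration of $M\cap{}^{\omega_2}\omega_1$ including all constant functions so that $\delta_\omega=M\cap\omega_1$, compute hulls via Fact \ref{fact_Hulls}, and then transfer from ``club-many $M$'' to the full principle --- and then prove (a)$\Rightarrow$(b) by a fusion in which each node $s$ of the condition carries the hull $N_s=\text{Hull}(M\cup\{\xi_1,\dots,\xi_k\})$, required inductively to be a $\sqsubseteq$-extension of $M$; $\text{SCC}^{\text{cof}}$ is applied once \emph{per node} ($\aleph_2$-many applications, not one play), and the single bound $M\cap\omega_1$ then works everywhere precisely because every $N_s$ has the same intersection with $\omega_1$ as $M$. (Equivalently, one can run one auxiliary play of the canonical strategy of (e) at each node; but some per-node, hull-based device is unavoidable, and your single-play scheme does not supply it.)
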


In \cite{MR2276627}, Friedman and Krueger considered the poset that adds a Cohen real and then shoots a club to kill the stationarity of $([\omega_2]^\omega)^V$ using countable conditions; let $\mathbb{Q}_{\text{FK}}$ denote this poset, which always preserves stationary subsets of $\omega_1$.  We will sometimes refer to it as the \emph{Friedman-Krueger poset}.  They asked whether ZFC proves that $\mathbb{Q}_{\text{FK}}$ is semiproper.  This was answered negatively in Cox~\cite{Cox_Nonreasonable}, where it was shown that semiproperness of $\mathbb{Q}_{\text{FK}}$ implies SCC, and hence has large cardinal consistency strength.  In fact, in \cite{Cox_Nonreasonable}, semiproperness of $\mathbb{Q}_{\text{FK}}$ was sandwiched between two versions of Strong Chang's Conjecture, though those two versions were also shown there to be non-equivalent.\footnote{Specifically, the principle $\text{SCC}^{\text{cof}}_{\text{gap}}$ was shown to imply semiproperness of $\mathbb{Q}_{\text{FK}}$, which in turn was shown to imply SCC.  That $\text{SCC}^{\text{cof}}_{\text{gap}}$ is strictly stronger than SCC, and even strictly stronger than $\text{SCC}^{\text{cof}}$, was shown in Section 3 of \cite{Cox_Nonreasonable}. }  

In this paper we introduce another version of Strong Chang's Conjecture, denoted $\text{SCC}^{\text{split}}$, and prove Theorem \ref{thm_CoxSakai} below, which is analogous to Shelah's Theorem \ref{thm_Shelah}.  In particular, Theorem \ref{thm_CoxSakai} exactly characterizes semiproperness of $\mathbb{Q}_{\text{FK}}$, and tightens the results from \cite{Cox_Nonreasonable}:

\begin{theorem}[Main Theorem]\label{thm_CoxSakai}
The following are equivalent:
\begin{enumerate}[label=(\alph*)]
 \item\label{item_SCCsplit} $\text{SCC}^{\text{split}}$
 \item\label{item_GitikPoset} The Friedman-Krueger poset is semiproper.
 \item\label{item_SomeGitikLike} There exists some semiproper forcing that kills the stationarity of $\big( [\omega_2]^\omega \big)^V$.
 \item\label{item_CoxGame} For every Skolemized structure $\mathfrak{A}$ in a countable language extending \[(H_{\omega_3},\in), \] Player II has a winning strategy in the following game of length $\omega$, which we denote by $\boldsymbol{\mathcal{G}}^{\textbf{split}}_{\boldsymbol{\mathfrak{A}}}$.   Player I plays $F_n:\omega_2 \to \omega_1$, and player II responds with some $\delta_n < \omega_1$.  Player II wins iff, letting $\delta_\omega:= \text{sup}_n \delta_n$, there exist $\alpha, \beta < \omega_2$ such that:
 \begin{itemize}
  \item $\forall n < \omega$, $F_n(\alpha)$ and $F_n(\beta)$ are both $<\delta_\omega$; and
  \item $\exists h: \omega_2 \times \omega_2 \to \omega_1$ such that $h \in \text{Hull}^{\mathfrak{A}}\big( \{ F_n \ : \ n \in \omega \} \big)$ and $h(\alpha,\beta) \ge \delta_\omega$.
\end{itemize}  
  \item\label{item_ParticularStrategy}  For every Skolemized structure $\mathfrak{A}$ in a countable language extending \[(H_{\omega_3},\in), \] the particular strategy, where II plays $\omega_1 \cap \text{Hull}^{\mathfrak{A}}(F_0,F_1, \dots, F_n)$ in the game $\mathcal{G}^{\text{split}}_{\mathfrak{A}}$ described in part \ref{item_CoxGame}, is a winning strategy for player II in that game. 
\end{enumerate}
\end{theorem}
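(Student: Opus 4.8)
The plan is to prove the cyclic chain $\ref{item_SCCsplit}\Rightarrow\ref{item_ParticularStrategy}\Rightarrow\ref{item_CoxGame}\Rightarrow\ref{item_GitikPoset}\Rightarrow\ref{item_SomeGitikLike}\Rightarrow\ref{item_SCCsplit}$, following the architecture of Shelah's proof of Theorem~\ref{thm_Shelah} but with the binary separating function $h$ of $\mathcal{G}^{\text{split}}_{\mathfrak{A}}$ doing the extra work. Two of the links are immediate: $\ref{item_ParticularStrategy}\Rightarrow\ref{item_CoxGame}$ since exhibiting a strategy witnesses that one exists, and $\ref{item_GitikPoset}\Rightarrow\ref{item_SomeGitikLike}$ since $\mathbb{Q}_{\text{FK}}$ kills the stationarity of $\big([\omega_2]^\omega\big)^V$ by construction.

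For $\ref{item_SCCsplit}\Rightarrow\ref{item_ParticularStrategy}$: fix a Skolemized $\mathfrak{A}\supseteq(H_{\omega_3},\in)$, let Player~I play an arbitrary run $\langle F_n:n\in\omega\rangle$ with each $F_n:\omega_2\to\omega_1$, and let Player~II respond by $\delta_n:=\omega_1\cap\text{Hull}^{\mathfrak{A}}(F_0,\dots,F_n)$. Put $H:=\text{Hull}^{\mathfrak{A}}(\{F_n:n\in\omega\})$, a countable elementary submodel of $(H_{\omega_3},\in)$ with $\delta_\omega=\sup_n\delta_n=H\cap\omega_1$, using that a Skolem hull is the directed union of the hulls of its finite subsets. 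Applying $\text{SCC}^{\text{split}}$ to $H$ gives models $N_0,N_1\supseteq H$ with $N_0\cap\omega_1=N_1\cap\omega_1=\delta_\omega$, ordinals $\alpha\in N_0\cap\omega_2$ and $\beta\in N_1\cap\omega_2$, and a separating function $h$ lying in $H$; then each $F_n\in H\subseteq N_i$, so elementarity gives $F_n(\alpha),F_n(\beta)<\delta_\omega$, while $h\in H=\text{Hull}^{\mathfrak{A}}(\{F_n\})$ and $h(\alpha,\beta)\ge\delta_\omega$ are exactly Player~II's winning conditions. (A minor point is to line up the quantifier on $h$ in the definition of $\text{SCC}^{\text{split}}$ with membership in $\text{Hull}^{\mathfrak{A}}(\{F_n\})$, which is why $\mathfrak{A}$ is required to extend $H_{\omega_3}$ rather than $H_{\omega_2}$.)

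For $\ref{item_CoxGame}\Rightarrow\ref{item_GitikPoset}$: this is where the length-$\omega$ game drives an $\omega$-step semigenericity construction for $\mathbb{Q}_{\text{FK}}$. Given large $\theta$, a countable $M\prec H_\theta$ with $\mathbb{Q}_{\text{FK}}\in M$, and $q\in M\cap\mathbb{Q}_{\text{FK}}$, choose an appropriate Skolemization $\mathfrak{A}$ of $(H_{\omega_3},\in)$ and build simultaneously a decreasing sequence $q=q_0\ge q_1\ge\cdots$ of conditions and a run of $\mathcal{G}^{\text{split}}_{\mathfrak{A}}$ in which Player~I's move $F_n$ codes the information decided by $q_n$ and exhausts the functions $\omega_2\to\omega_1$ lying in $M$, while Player~II uses a winning strategy (for instance the canonical one of part~\ref{item_ParticularStrategy}). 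Passing to a lower bound $q^*$ and reading off Player~II's winning data $\alpha,\beta,h$: the clause $F_n(\alpha),F_n(\beta)<\delta_\omega$ makes $q^*$ genuinely $(M,\mathbb{Q}_{\text{FK}})$-semigeneric, while the separating $h$ with $h(\alpha,\beta)\ge\delta_\omega$ is precisely the certificate that the countable set $M$ contributes to the generic club is not in $V$ --- the single extra demand on a condition in the club-shooting part of $\mathbb{Q}_{\text{FK}}$ that plain semigenericity cannot meet. I expect this step to be the main obstacle: the delicate bookkeeping is to set up the run so that Player~II's winning condition matches, on the nose, the definition of a semigeneric condition for $\mathbb{Q}_{\text{FK}}$, with the binary arity of $h$ being exactly what certifies newness; this is the point at which the present result sharpens the one-directional implications of Cox~\cite{Cox_Nonreasonable}.

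For $\ref{item_SomeGitikLike}\Rightarrow\ref{item_SCCsplit}$: the reflection step. Let $\mathbb{P}$ be semiproper forcing that kills the stationarity of $\big([\omega_2]^\omega\big)^V$, fix a $\mathbb{P}$-name $\dot F$ for a function $[\omega_2]^{<\omega}\to\omega_2$ with no $\omega$-closure point in $V$, and take a countable $M\prec H_\theta$ with $\mathbb{P},\dot F\in M$. By semiproperness choose an $(M,\mathbb{P})$-semigeneric $p$; then below $p$, $\dot F\in M[\dot G]$, so $M[\dot G]\cap\omega_2$ is forced to be an $\omega$-closure point of $\dot F$, hence forced not to lie in $V$, hence forced to differ from $M\cap\omega_2$ --- here semigenericity already suffices, for the reason isolated in \cite{Cox_Nonreasonable}. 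Choosing two extensions $p_0,p_1\le p$ that decide this disagreement differently yields $\mathbb{P}$-names forced to take $M[\dot G]\cap\omega_2$ to distinct new countable subsets of $\omega_2$ with common $\omega_1$-part $M\cap\omega_1$; a mixing and L\"owenheim--Skolem argument carried out inside $V$ --- using that $\mathbb{P}$ preserves $\omega_1$ and $\omega_2$ --- then extracts from these names the ground-model models $N_0,N_1\supseteq M$ with $N_0\cap\omega_1=N_1\cap\omega_1=M\cap\omega_1$ together with the separating $h\in M$ required by $\text{SCC}^{\text{split}}$. The point needing the most care in matching definitions is why two distinct new countable subsets sharing their $\omega_1$-part must be separated below $M\cap\omega_1$ by a function of $M$: this is exactly where the binary arity of $h$ is essential, and it is the feature distinguishing $\text{SCC}^{\text{split}}$ from the cofinal version $\text{SCC}^{\text{cof}}$.
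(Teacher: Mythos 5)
Your cyclic decomposition differs from the paper's (which proves \ref{item_SCCsplit}$\Rightarrow$\ref{item_GitikPoset}$\Rightarrow$\ref{item_SomeGitikLike}$\Rightarrow$\ref{item_SCCsplit} and separately \ref{item_SCCsplit}$\Rightarrow$\ref{item_ParticularStrategy}$\Rightarrow$\ref{item_CoxGame}$\Rightarrow$\ref{item_SCCsplit}), and the difference itself would be fine; the problem is that your link \ref{item_CoxGame}$\Rightarrow$\ref{item_GitikPoset} is not an argument. Semiproperness of $\mathbb{Q}_{\text{FK}}$ requires more than semigenericity: by the reduction from \cite{Cox_Nonreasonable} used in the paper, one must produce, in $V[\sigma]$ for an $\text{Add}(\omega)$-generic $\sigma$, a countable $N \prec (H_\theta[\sigma],\in)$ with $M[\sigma] \sqsubseteq N$ and $N \cap \omega_2^V \notin V$. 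A single run of $\mathcal{G}^{\text{split}}_{\mathfrak{A}}$ yields one pair $\alpha,\beta$ and one $h$ with $h(\alpha,\beta)\ge\delta_\omega$; this certifies one splitting, not the non-membership in $V$ of any countable set, and it cannot diagonalize against all ground-model candidates for $N\cap\omega_2^V$. The paper's proof of \ref{item_SCCsplit}$\Rightarrow$\ref{item_GitikPoset} needs the full binary tree $T_M$ of pairwise $\subseteq$-incomparable $\sqsubseteq$-extensions indexed by $2^{<\omega}$, so that the new real selects a branch whose union has $\omega_2^V$-part outside $V$ (Claim \ref{clm_BranchNotInV}: otherwise $\sigma$ could be decoded in $V$), plus the c.c.c.\ master-condition argument (Claim \ref{clm_SigmaAddsNoORDS}) to preserve the $\omega_1$-part. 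Your sketch contains no mechanism playing the role of this branch-decoding step, and you flag the step yourself as the expected obstacle; as written it is a genuine gap. A realistic repair is to insert \ref{item_CoxGame}$\Rightarrow$\ref{item_SCCsplit} (playing the constant functions and using Fact \ref{fact_Hulls}, as the paper does) and then prove \ref{item_SCCsplit}$\Rightarrow$\ref{item_GitikPoset} via the tree construction.

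Your \ref{item_SomeGitikLike}$\Rightarrow$\ref{item_SCCsplit} step has a related soft spot. Taking $p_0,p_1\le p$ that ``decide the disagreement differently'' gives conditions forcing different information about $M[\dot G]\cap\omega_2^V$, but the resulting ground-model hulls need not be $\subseteq$-incomparable below $\omega_2$: the dangerous scenario is precisely that every countable $\sqsubseteq$-extension of $M$ lines up along a single Chang structure, so all such hulls are initial segments of one set of order type $\omega_1$. The paper rules this out by contradiction: assuming $\text{SCC}^{\text{split}}$ fails on a stationary set, it first derives SCC, proves the Chang extension of such an $M$ is unique below $\omega_2$ (Claims \ref{clm_UniqueChang} and \ref{clm_ExtOfMContainedXM}), and then shows $M[G]\cap\omega_2^V$ would be an initial segment of $X_M\cap\omega_2\in V$, contradicting its forced newness; your ``mixing and L\"owenheim--Skolem'' extraction would have to be replaced by an argument of this kind. (Also note $\text{SCC}^{\text{split}}$ itself does not demand a separating $h\in M$ --- that is the game formulation.) Your \ref{item_SCCsplit}$\Rightarrow$\ref{item_ParticularStrategy} step is essentially the paper's, except that $\text{SCC}^{\text{split}}$ does not hand you $h$ directly: one must first check $\text{Hull}^{\mathfrak{A}}(X\cup\{\alpha,\beta\})\cap\omega_1 > X\cap\omega_1$ (otherwise end-extension would make $X(\alpha)\cap\omega_2$ and $X(\beta)\cap\omega_2$ comparable) and then apply Fact \ref{fact_Hulls}; that omission is fixable, unlike the two gaps above.
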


Section \ref{sec_Prelims} provides some background, and Section \ref{sec_ProofCoxSakai} proves the main Theorem \ref{thm_CoxSakai}.  Section  \ref{sec_ConcludingRemarks} discusses how Theorem \ref{thm_CoxSakai} sheds light on a question that was asked directly by Usuba, but is closely related to other questions in the literature.

\section{Preliminaries, and versions of Strong Chang's Conjecture}\label{sec_Prelims}

Given sets $M$ and $N$, $M \sqsubseteq N$ means that $M \subseteq N$ and $M \cap \omega_1 = N \cap \omega_1$.  Given a poset $\mathbb{P}$, a countable $N \prec (H_\theta,\in,\mathbb{P})$, and a condition $p$, we say that $p$ is an $(M,\mathbb{P})$-semimaster condition iff for every $\dot{\alpha} \in M$ that names a countable ordinal, $p \Vdash \dot{\alpha} \in \check{M} \cap \omega_1$.  This is equivalent to requiring that $p \Vdash \check{M} \sqsubseteq \check{M}[\dot{G}]$.  We say $\mathbb{P}$ is semiproper iff for every $\theta$ with $\mathbb{P} \in H_\theta$, every countable $M \prec (H_\theta,\in,\mathbb{P})$, and every $p_0 \in M \cap \mathbb{P}$, there exists a $p \le p_0$ that is an $(M,\mathbb{P})$-semimaster condition.

We frequently use the following fact (see e.g.\ Larson-Shelah~\cite{MR2030084}):
\begin{fact}\label{fact_Hulls}
If $\theta$ is regular uncountable, $\mathfrak{A}$ is a structure on $H_\theta$ in a countable language which has definable Skolem functions, $M \prec \mathfrak{A}$, and $Y$ is a subset of some $\eta \in M$, then 
\[
\text{Hull}^{\mathfrak{A}}(M \cup Y) = \{ f(y) \ | \  y \in [Y]^{<\omega} \text{ and }  f \in M \cap {}^{[\eta]^{<\omega}} H_\theta \}.
\]
\end{fact}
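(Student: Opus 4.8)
The plan is to prove both inclusions directly, using only that $\mathfrak{A}$ has definable Skolem functions — so that for any $Z \subseteq H_\theta$, $\text{Hull}^{\mathfrak{A}}(Z)$ is an elementary substructure of $\mathfrak{A}$, hence closed under all $\mathfrak{A}$-definable functions, and equals the set of values $\tau^{\mathfrak{A}}(\bar z)$ as $\tau$ ranges over Skolem terms and $\bar z$ over finite tuples from $Z$ — together with the standing observation that every hereditarily finite set is definable in $\mathfrak{A}$ without parameters, so that in particular every natural number and every finite function belongs to $M$ (since $M \prec \mathfrak{A}$).

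For the inclusion $\supseteq$: suppose $y \in [Y]^{<\omega}$ and $f \in M \cap {}^{[\eta]^{<\omega}} H_\theta$. Since each element of $y$ lies in $Y \subseteq M \cup Y$, and a finite set is definable from a listing of its elements (via pairing and union), we get $y \in \text{Hull}^{\mathfrak{A}}(M \cup Y)$; and $f \in M \subseteq \text{Hull}^{\mathfrak{A}}(M \cup Y)$. As the binary operation ``value of the function in the first argument at the point in the second argument'' is definable in $\mathfrak{A}$, and $y \in \text{dom}(f)$, closure of the hull under definable functions yields $f(y) \in \text{Hull}^{\mathfrak{A}}(M \cup Y)$.

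For the inclusion $\subseteq$: let $a \in \text{Hull}^{\mathfrak{A}}(M \cup Y)$, and fix a Skolem term $\tau$, parameters $m_1, \dots, m_k \in M$, and elements $y_1, \dots, y_n \in Y$ with $a = \tau^{\mathfrak{A}}(m_1, \dots, m_k, y_1, \dots, y_n)$. Put $y := \{y_1, \dots, y_n\}$; then $y \in [Y]^{<\omega} \subseteq [\eta]^{<\omega}$ because $Y \subseteq \eta$. Let $j := |y|$, let $e_y \colon j \to y$ be the increasing enumeration of $y$, and let $\sigma \colon n \to j$ be the (finite, hence element of $M$) function determined by $y_i = e_y(\sigma(i))$. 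Define $f$ with domain $[\eta]^{<\omega}$ by setting $f(z) := \tau^{\mathfrak{A}}\bigl(m_1, \dots, m_k, e_z(\sigma(0)), \dots, e_z(\sigma(n-1))\bigr)$ when $|z| = j$, where $e_z$ is the increasing enumeration of $z$, and $f(z) := 0$ otherwise. Then $f$ is definable in $\mathfrak{A}$ from the parameters $m_1, \dots, m_k$, $\eta$, $j$, $n$, $\sigma$, all of which lie in $M$, so $f \in M$ by elementarity, and $f \in {}^{[\eta]^{<\omega}} H_\theta$ by construction; moreover $f(y) = \tau^{\mathfrak{A}}(m_1, \dots, m_k, y_1, \dots, y_n) = a$, as required.

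The only point needing care — the ``main obstacle,'' such as it is — is the bookkeeping in the $\subseteq$ direction: the tuple $(y_1, \dots, y_n)$ may list elements of $Y$ out of order and with repetitions, and one must convert it into a genuine function $f$ on the \emph{sets} in $[\eta]^{<\omega}$ that still belongs to $M$. This is precisely what the auxiliary finite function $\sigma$ handles, exploiting that finite functions are parameter-free definable in $\mathfrak{A}$ and hence lie in $M$.
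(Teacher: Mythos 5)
Your proof is correct. The paper itself gives no proof of Fact~\ref{fact_Hulls}, citing it as standard (to Larson--Shelah), and your argument is exactly the standard one: the term representation of the Skolem hull for the inclusion $\subseteq$, with the finite function $\sigma$ handling order and repetitions when passing from tuples to sets, and closure of the hull under the definable evaluation and finite-set operations for $\supseteq$ (which, as in the paper's applications, tacitly uses that $\mathfrak{A}$ expands $(H_\theta,\in)$). The one step left implicit is that your $f$ is an element of $H_\theta$, so that elementarity of $M$ in $\mathfrak{A}$ can be invoked to conclude $f \in M$; this follows since $|[\eta]^{<\omega}| < \theta$ and $\theta$ is regular, and is precisely where the regularity hypothesis enters.
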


\begin{definition}\label{def_SCC_variants}
We define three principles, denoted $\textbf{SCC}^{\textbf{cof}}$, $\textbf{SCC}^{\textbf{split}}$, and $\textbf{SCC}$, in parallel:  for all sufficiently large regular $\theta$ and all wellorders $\Delta$ on $H_\theta$, if $M \prec (H_\theta,\in,\Delta)$ and $M$ is countable then  
\noindent \begin{itemize}
 \item ($\textbf{SCC}^{\textbf{cof}}$:) \  $\forall \beta < \omega_2$  $\exists M' \ \ M \sqsubseteq M' \prec (H_\theta,\in,\Delta) \text{ and } \beta \le \text{sup}(M' \cap \omega_2)$.
 \item  ($\textbf{SCC}^{\textbf{split}}$:) \    $\exists M_0,M_1$ such that $M \sqsubseteq M_i \prec (H_\theta,\in,\Delta)$ for $i \in \{ 0,1\}$, and $M_0 \cap \omega_2$ is $\subseteq$-incomparable with $M_1 \cap \omega_2$.\footnote{I.e.\ $M_0 \cap \omega_2 \nsubseteq M_1 \cap \omega_2$ and $M_1 \cap \omega_2 \nsubseteq M_0 \cap \omega_2$.}
 \item (\textbf{SCC:}) \   $\exists M'$ \ $M \sqsubseteq M' \prec (H_\theta,\in,\Delta)$ and $M' \cap \omega_2 \supsetneq M \cap \omega_2$.
\end{itemize}
\end{definition}

The principles $\text{SCC}$ and $\text{SCC}^{\text{cof}}$ have been considered many times in the literature, though the terminology is highly inconsistent; see Table 1 (p.\ 622) of \cite{Cox_Nonreasonable} for a summary of their use in the literature.  The principle $\text{SCC}^{\text{split}}$ has not, as far as the authors are aware, been considered before.

\begin{remark}\label{rem_EndExtension}
If $M \sqsubseteq N$ and both are elementary in $(H_\theta,\in,\Delta)$, then $M \cap \omega_2$ is an initial segment of $N \cap \omega_2$.  This is because if $\alpha \in M \cap \omega_2$ and $f$ is the $\Delta$-least surjection from $\omega_1 \to \alpha$, then $f \in M \sqsubseteq N$ and so
\[
M \cap \alpha = f[M \cap \omega_1] = f[N \cap \omega_1] = N \cap \alpha.
\]
Hence in the definition of SCC, we could have equivalently required that $\text{sup}(M \cap \omega_2) < \text{sup}(N \cap \omega_2)$.
\end{remark}

\begin{lemma} 
\[
\text{SCC}^{\text{cof}} \ \implies \ \text{SCC}^{\text{split}} \ \ \implies \text{SCC}
\]
\end{lemma}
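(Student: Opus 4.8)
The plan is to verify both implications directly from Definition \ref{def_SCC_variants}, fixing a sufficiently large regular $\theta$, a wellorder $\Delta$ on $H_\theta$, and a countable $M \prec (H_\theta, \in, \Delta)$.

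For the first implication, $\text{SCC}^{\text{cof}} \implies \text{SCC}^{\text{split}}$, the idea is to produce the two $\sqsubseteq$-extensions $M_0, M_1$ of $M$ with $\subseteq$-incomparable traces on $\omega_2$ by applying $\text{SCC}^{\text{cof}}$ twice, but with a twist. First I would fix any $\beta_0 \in \omega_2 \setminus M$ above $\sup(M \cap \omega_2)$ and apply $\text{SCC}^{\text{cof}}$ to get $M \sqsubseteq M_0 \prec (H_\theta,\in,\Delta)$ with $\beta_0 \le \sup(M_0 \cap \omega_2)$; by Remark \ref{rem_EndExtension}, $M_0 \cap \omega_2$ end-extends $M \cap \omega_2$, and it is strictly larger since it reaches past $\sup(M \cap \omega_2)$. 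Now pick an ordinal $\gamma \in (M_0 \cap \omega_2) \setminus M$ — such a $\gamma$ exists precisely because $M_0 \cap \omega_2 \supsetneq M \cap \omega_2$ — and apply $\text{SCC}^{\text{cof}}$ again, this time to produce $M \sqsubseteq M_1 \prec (H_\theta,\in,\Delta)$ with $\sup(M_1 \cap \omega_2)$ large, but chosen so that $\gamma \notin M_1$: this requires a slight refinement, namely that $\text{SCC}^{\text{cof}}$ can be applied "above" a prescribed ordinal while avoiding a given element not in $M$. Concretely, the cleanest route is to observe that since $M \cap \omega_2$ is a proper initial segment of $M_0 \cap \omega_2$, letting $\gamma := \min\big((M_0\cap\omega_2)\setminus M\big)$, one has $M \cap \gamma = M_0 \cap \gamma$ with $\gamma \notin M$; then any $M_1 \sqsubseteq$-extending $M$ with $M_1 \cap \omega_2$ end-extending $M \cap \omega_2$ and $\gamma \notin M_1$ will have $M_1 \cap \omega_2 \nsubseteq M_0 \cap \omega_2$ is not quite what we want — rather, we want incomparability, which is guaranteed if $M_1$ contains some ordinal $\delta < \gamma$ with $\delta \notin M_0$... but $M \cap \gamma = M_0 \cap \gamma$ blocks that. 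So the correct argument is the reverse: apply $\text{SCC}^{\text{cof}}$ to get $M_0$ large, then note $M_0 \cap \omega_2$ already strictly end-extends $M \cap \omega_2$; if it happens that every $\sqsubseteq$-extension of $M$ had trace comparable with $M_0 \cap \omega_2$, then in particular the trace of $M_0$ would be $\subseteq$-maximal among all such, which (via a second application of $\text{SCC}^{\text{cof}}$ at a point above $\sup(M_0\cap\omega_2)$) is contradictory. I expect this bookkeeping to be the main obstacle: ensuring genuine $\subseteq$-incomparability rather than mere distinctness, given that Remark \ref{rem_EndExtension} forces all $\sqsubseteq$-extensions to have end-extending traces. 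The resolution is that two end-extensions of $M \cap \omega_2$ can still be incomparable (neither an initial segment of the other) as long as neither is an initial segment of the other as sets — and this is arranged by making $\sup(M_0 \cap \omega_2) \ne \sup(M_1 \cap \omega_2)$ impossible to be the only obstruction, i.e.\ by a careful choice of the target ordinals $\beta_0, \beta_1$ together with witnesses that each $M_i$ omits some ordinal below $\sup(M_{1-i}\cap\omega_2)$.

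For the second implication, $\text{SCC}^{\text{split}} \implies \text{SCC}$, the argument is short: apply $\text{SCC}^{\text{split}}$ to $M$ to obtain $M_0, M_1$ with $M \sqsubseteq M_i \prec (H_\theta,\in,\Delta)$ and $M_0 \cap \omega_2$, $M_1 \cap \omega_2$ $\subseteq$-incomparable. By Remark \ref{rem_EndExtension}, each $M_i \cap \omega_2$ is an end-extension of $M \cap \omega_2$. If both were equal to $M \cap \omega_2$ they would be comparable (indeed equal), contradicting incomparability; so at least one of them, say $M_0$, satisfies $M_0 \cap \omega_2 \supsetneq M \cap \omega_2$. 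Taking $M' := M_0$ witnesses $\text{SCC}$ for $M$. Since $M, \theta, \Delta$ were arbitrary, $\text{SCC}$ holds.

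A cleaner way to organize the first implication, which I would ultimately prefer to write up, avoids the awkward casework: given $\text{SCC}^{\text{cof}}$, first get $M_0 \sqsubseteq$-extending $M$ with $\sup(M_0 \cap \omega_2) \ge \beta_0$ for some fixed $\beta_0 > \sup(M\cap\omega_2)$; then, working inside the structure, let $\gamma$ be the least element of $M_0 \cap \omega_2$ above $\sup(M \cap \omega_2)$ and apply $\text{SCC}^{\text{cof}}$ once more to the \emph{same} $M$ but relative to a re-enumeration of $\omega_2$ that sends $\gamma$ past position $\sup(M_0 \cap \omega_2)$, yielding $M_1 \sqsubseteq$-extending $M$ whose trace on $\omega_2$ reaches high but, by the re-indexing together with Fact \ref{fact_Hulls}, contains a point strictly between $\sup(M\cap\omega_2)$ and $\gamma$ — such a point lies in $M_1 \cap \omega_2$ but not in $M_0 \cap \omega_2$, while $\gamma \in M_0 \cap \omega_2 \setminus M_1$, giving incomparability. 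The technical heart is the re-indexing step and confirming it keeps $M \sqsubseteq M_1$; Fact \ref{fact_Hulls} is the tool that controls which ordinals land in the hull. I expect that, modulo choosing the right auxiliary bijection of $\omega_2$, both parts go through with no large-cardinal input and no forcing, purely by elementarity.
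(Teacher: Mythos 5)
Your second implication is fine and matches the paper's (one-line) argument: by Remark \ref{rem_EndExtension} both traces end-extend $M \cap \omega_2$, and incomparability forces at least one (in fact both) to be a proper extension.

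The first implication, however, has a genuine gap, and you essentially identify it yourself without resolving it. Two applications of $\text{SCC}^{\text{cof}}$ cannot by themselves produce incomparable traces: the second extension $M_1$, reaching above $\sup(M_0 \cap \omega_2)$, may simply satisfy $M_0 \cap \omega_2 \subseteq M_1 \cap \omega_2$, so your ``maximality'' contradiction never materializes --- $\text{SCC}^{\text{cof}}$ only controls $\sup(M' \cap \omega_2)$, not which ordinals below a given point land in $M'$, so nothing rules out the a priori possibility that all $\sqsubseteq$-extensions of this particular $M$ have pairwise comparable traces. Your fallback ``re-enumeration of $\omega_2$'' device does not repair this: $\text{SCC}^{\text{cof}}$ is a statement about the actual set $M' \cap \omega_2$ and its supremum, and composing with a bijection of $\omega_2$ (which, if not in $M$, also breaks the elementarity bookkeeping) gives no control forcing a point of $M_1$ into the interval $(\sup(M \cap \omega_2), \gamma)$. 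The missing idea, which is how the paper argues, is to iterate: build a $\sqsubseteq$-increasing chain $\langle M_i : i < \omega_1 \rangle$ of countable extensions of $M = M_0$, elementary in $(H_\theta, \in, \Delta)$, with $\sup(M_{i+1} \cap \omega_2) > \sup(M_i \cap \omega_2)$ (using $\text{SCC}^{\text{cof}}$, or just $\text{SCC}$, at successors and unions at limits). The union $X$ then has uncountable trace on $\omega_2$ with $\eta := \sup(X \cap \omega_2) < \omega_2$. One further application of $\text{SCC}^{\text{cof}}$ to $M$ at $\beta = \eta + 1$ gives a countable $N \sqsupseteq M$ with $\sup(N \cap \omega_2) > \eta$; since $N$ is countable it omits some ordinal of $X \cap \omega_2$, say one lying in $M_{i_0}$, so $M_{i_0} \cap \omega_2 \nsubseteq N \cap \omega_2$, while $N \cap \omega_2 \nsubseteq M_{i_0} \cap \omega_2$ because $N$ contains an ordinal above $\eta \ge \sup(M_{i_0} \cap \omega_2)$. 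The pair $M_{i_0}, N$ witnesses $\text{SCC}^{\text{split}}$ at $M$; it is the uncountable length of the chain (not any finite number of applications) that makes the incomparability unavoidable.
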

\begin{proof}
The right implication is obvious, since if $M_0$ and $M_1$ are both $\sqsubseteq$-extensions of $M$ whose intersections with $\omega_2$ are $\subseteq$-incomparable, then both must properly extend $M$ below $\omega_2$.  

For the left implication, consider any countable $M \prec (H_\theta,\in,\Delta)$ (with $\theta$ and $\Delta$ as in Definition \ref{def_SCC_variants}), and construct a $\sqsubseteq$-ascending chain $\langle M_i \ : \ i < \omega_1 \rangle$ of countable elementary substructures of $(H_\theta,\in,\Delta)$, such that $M_0 = M$ and $\text{sup}(M_{i+1} \cap \omega_2) > \text{sup}(M_i \cap \omega_2)$ for all $i < \omega_1$; this can be done by applying $\text{SCC}^{\text{cof}}$ (or just SCC) at successor steps, and taking unions at limit steps.  Let $X:=\bigcup_{i < \omega_1} M_i$ and $\eta:= \text{sup}(X \cap \omega_2)$.  By $\text{SCC}^{\text{cof}}$ there is some countable $N$ such that $M \sqsubseteq N \prec (H_\theta,\in,\Delta)$, and $\text{sup}(N \cap \omega_2) > \eta$.  Since $X \cap \omega_2$ is uncountable, there is some $i_0 < \omega_1$ such that $M_{i_0} \cap \omega_2 \nsubseteq N$.  But also $N \cap \omega_2 \nsubseteq M_{i_0}$ because $M_{i_0} \cap \omega_2 \subset \eta$.
\end{proof}

The following lemma is convenient for a couple of reasons.  First, it implies that if $\text{SCC}^{\text{split}}$ fails, then it fails for stationarily many $M \in [H_\theta]^\omega$.  Also, it allows us to replace ``every" with ``club-many" in the definition of $\text{SCC}^{\text{split}}$, but \emph{without} having to expand the structure in which we require elementarity of the end-extensions.  This latter feature is useful, for example, in the proof in Section \ref{sec_FromWS}.
\begin{lemma}\label{lem_FailsOnStatSet}
The following are equivalent.
\begin{enumerate}
 \item $\text{SCC}^{\text{split}}$;
 \item There are club-many $M \in [H_{\omega_3}]^\omega$ such that for $i \in \{ 0,1 \}$, there exist countable $M_i$ such that $M \sqsubseteq M_i \prec (H_{\omega_3},\in)$, and $M_0 \cap \omega_2$ is $\subseteq$-incomparable with $M_1 \cap \omega_2$.
\end{enumerate}
\end{lemma}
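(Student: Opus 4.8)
The implication (2)$\Rightarrow$(1) is immediate: if the property in (2) holds for club-many $M \in [H_{\omega_3}]^\omega$, then in particular it holds for some $M$, and since the sets $M_0, M_1$ witnessing $\subseteq$-incomparability of the traces on $\omega_2$ are automatically elementary in the richer structure $(H_\theta,\in,\Delta)$ once we replace $H_{\omega_3}$ by a large enough $H_\theta$ (using that $H_{\omega_3}$ is definable in $H_\theta$ and that $\subseteq$-incomparability of $M_0 \cap \omega_2$ and $M_1 \cap \omega_2$ is absolute), one transfers this to the statement of $\text{SCC}^{\text{split}}$ in Definition \ref{def_SCC_variants}. So the content is (1)$\Rightarrow$(2): from the ``every countable $M$'' formulation over large $\theta$, deduce the ``club-many $M \in [H_{\omega_3}]^\omega$'' formulation with elementarity only in $(H_{\omega_3},\in)$.

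The main idea is a reflection/condensation argument. Assume $\text{SCC}^{\text{split}}$, fix $\theta$ huge and a wellorder $\Delta$ on $H_\theta$ witnessing the principle, and let $N \prec (H_\theta,\in,\Delta)$ be countable with $H_{\omega_3} \in N$. I claim $M := N \cap H_{\omega_3}$ has the property in (2); since such $M$ form a club in $[H_{\omega_3}]^\omega$, this suffices. Apply $\text{SCC}^{\text{split}}$ to $N$ itself: get countable $N_0, N_1$ with $N \sqsubseteq N_i \prec (H_\theta,\in,\Delta)$ and $N_0 \cap \omega_2$ $\subseteq$-incomparable with $N_1 \cap \omega_2$. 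Now set $M_i := N_i \cap H_{\omega_3}$. Then $M_i \prec (H_{\omega_3},\in)$ (as $H_{\omega_3} \in N_i$), $M \subseteq M_i$, and $M \cap \omega_1 = N \cap \omega_1 = N_i \cap \omega_1 = M_i \cap \omega_1$, so $M \sqsubseteq M_i$; finally $M_i \cap \omega_2 = N_i \cap \omega_2$ since $\omega_2 \subseteq H_{\omega_3}$, so the two traces remain $\subseteq$-incomparable. This gives (2) directly, with the bonus that the $M_i$ are only required to be elementary in $(H_{\omega_3},\in)$, exactly the ``cheap'' structure.

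The only genuinely delicate point — and the place where one must be careful rather than merely routine — is the reverse transfer needed to make the equivalence a true ``iff'', i.e.\ checking that the weaker hypotheses in clause (2) (elementarity merely in $(H_{\omega_3},\in)$, and the conclusion holding only club-many rather than for every countable $M$) still yield the full strength of $\text{SCC}^{\text{split}}$ as stated in Definition \ref{def_SCC_variants} for \emph{all} sufficiently large $\theta$ and \emph{all} wellorders $\Delta$. For this one uses the standard fact that if $M_i \prec (H_{\omega_3},\in)$ then one can find $M_i^\ast \prec (H_\theta,\in,\Delta)$ with $M_i^\ast \cap H_{\omega_3} = M_i$ — but this generally fails, so instead the right move is: given a countable $M \prec (H_\theta,\in,\Delta)$, note $M \cap H_{\omega_3}$ is a countable elementary submodel of $(H_{\omega_3},\in)$, the club from (2) must meet the club of such traces (which is a club in $[H_{\omega_3}]^\omega$ in the strong, projective sense), and then use Remark \ref{rem_EndExtension} together with the fact that $\subseteq$-incomparability of subsets of $\omega_2$ is witnessed by a pair of ordinals, hence reflected by and preserved under passing between $M_i$ and any $\sqsubseteq$-extension. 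I would organize the write-up so that (1)$\Rightarrow$(2) is the short direct argument above, and (2)$\Rightarrow$(1) handles the $\theta$, $\Delta$ quantifiers by the trace-and-reflect bookkeeping; the latter is where I expect to spend the most care, though no step is technically hard.
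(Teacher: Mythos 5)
Your direction (1)$\Rightarrow$(2) is correct and is the standard move: apply $\text{SCC}^{\text{split}}$ to a countable $N \prec (H_\theta,\in,\Delta)$ with $H_{\omega_3} \in N$ and pass to the traces $N_i \cap H_{\omega_3}$, noting the set of such traces is club. (For the record, the paper omits the proof entirely, deferring to Lemma 13 of \cite{Cox_Nonreasonable}; the expected argument is the trace-and-hull computation sketched below.) The problem is the direction (2)$\Rightarrow$(1), which is the real content of the lemma and which your write-up does not establish. Your opening claim that it is ``immediate'' because the $M_0,M_1$ from (2) are ``automatically elementary in the richer structure'' is false -- elementarity in $(H_{\omega_3},\in)$ gives no elementarity in $(H_\theta,\in,\Delta)$, as you yourself concede two paragraphs later -- and the replacement sketch has two genuine holes.

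First, given an arbitrary countable $M \prec (H_\theta,\in,\Delta)$ you need the splitting property at $M \cap H_{\omega_3}$ \emph{itself}; saying ``the club from (2) must meet the club of traces'' only produces the property at \emph{some} trace, and the gap between ``club-many'' and ``all'' is exactly what the lemma is asserting, so it cannot be waved at. The fix is that the set $S$ of $M' \in [H_{\omega_3}]^\omega$ with the property in (2) is definable from parameters in $M$, so by elementarity $M$ contains a club $C \subseteq S$, and then $M \cap H_{\omega_3} \in C$. Second, and more importantly, the heart of the direction is missing: given countable $P_0,P_1$ with $M \cap H_{\omega_3} \sqsubseteq P_i \prec (H_{\omega_3},\in)$ and $P_0\cap\omega_2$, $P_1\cap\omega_2$ incomparable, pick $\alpha \in (P_0\cap\omega_2)\setminus P_1$ and $\beta \in (P_1\cap\omega_2)\setminus P_0$ and set $M_0:=\text{Hull}^{(H_\theta,\in,\Delta)}(M\cup\{\alpha\})$, $M_1:=\text{Hull}^{(H_\theta,\in,\Delta)}(M\cup\{\beta\})$. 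By Fact \ref{fact_Hulls}, every ordinal of $M_0$ below $\omega_2$ has the form $f(\alpha)$ with $f \in M\cap{}^{\omega_2}\omega_2$ (or $f \in M\cap{}^{\omega_2}\omega_1$ for the ordinals below $\omega_1$); such $f$ lie in $M\cap H_{\omega_3}\subseteq P_0$, and $P_0 \prec (H_{\omega_3},\in)$ is closed under them, so $M_0\cap\omega_1 = M\cap\omega_1$ and $M_0\cap\omega_2\subseteq P_0\cap\omega_2$, whence $\beta\notin M_0$; symmetrically $\alpha\notin M_1$, giving the required incomparable $\sqsubseteq$-extensions of $M$ that are elementary in $(H_\theta,\in,\Delta)$. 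Your appeal to Remark \ref{rem_EndExtension} and to ``incomparability is witnessed by a pair of ordinals'' never addresses the crux, namely why the hull taken in the big structure adds no new countable ordinals; without that computation the proof is not there.
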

The proof of Lemma \ref{lem_FailsOnStatSet} is almost identical to the proof of Lemma 13 of \cite{Cox_Nonreasonable}, so we omit it.

%The following lemma, whose proof we leave to the reader, will be convenient in the proof of Theorem \ref{}:
%\begin{lemma}\label{lem_AnotherEquiv}
%The following are equivalent:
%\begin{enumerate}
% \item $\text{SCC}^{\text{split}}$;
% \item For all wellorders $\Delta$ of $H_{\omega_3}$ and all countable $M \prec (H_{\omega_3},\in,\Delta)$, there exist ordinals $\alpha,\beta < \omega_2$ such that, letting $M(X):=\text{Hull}^{(H_{\omega_3},\in,\Delta)}(M \cup X)$ for any set $X$, we have:
%\begin{itemize}
% \item $M \sqsubseteq M(\alpha)$
% \item $M \sqsubseteq M(\beta)$;
% \item $M \cap \omega_1 <  M(\alpha,\beta) \cap \omega_1$.
%\end{itemize} 
% \end{enumerate}
%\end{lemma}

We now state a convenient characterization of SCC, which in particular shows why it is a strong form of Chang's Conjecture.  By a \textbf{Chang set} or \textbf{Chang structure} we mean a set $X$ such that $|X \cap \omega_2|=\omega_1$ and $X \cap \omega_1 \in \omega_1$.  If $X$ is a Chang elementary substructure of $(H_\theta,\in)$, then $X \cap \omega_2$ always has ordertype exactly $\omega_1$.\footnote{This is because if $\alpha \in X \cap [\omega_1,\omega_2)$, then by elementarity there is an $f \in X$ such that $f:\omega_1 \to \alpha$ is a bijection.  It follows again by elementarity that $X \cap \alpha = f[X \cap \omega_1]$, which is countable. }
\begin{lemma}\label{lem_SCC_Equiv}
The following are equivalent:
\begin{enumerate}
 \item SCC
 \item For all sufficiently large regular $\theta$, all wellorders $\Delta$ on $H_\theta$, and all countable $M \prec (H_\theta,\in,\Delta)$, there exists a Chang set $X$ such that $X \prec (H_\theta,\in,\Delta)$ and $M \sqsubset X$. 
\end{enumerate}
\end{lemma}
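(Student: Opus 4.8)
The plan is to prove the two implications separately and directly; the only step requiring any real idea is the forward direction $(2)\Rightarrow(1)$, where Fact~\ref{fact_Hulls} does all the work, while $(1)\Rightarrow(2)$ is a continuous-chain argument.

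First I would handle $(1)\Rightarrow(2)$. Fix a sufficiently large regular $\theta$ (so that SCC applies), a wellorder $\Delta$ on $H_\theta$, and a countable $M\prec(H_\theta,\in,\Delta)$; set $\delta:=M\cap\omega_1$. Recursively build a $\sqsubseteq$-increasing, continuous chain $\langle M_i : i<\omega_1\rangle$ of countable elementary substructures of $(H_\theta,\in,\Delta)$ with $M_0=M$: at a successor stage apply SCC to $M_i$ to get $M_{i+1}$ with $M_i\sqsubseteq M_{i+1}$ and $M_{i+1}\cap\omega_2\supsetneq M_i\cap\omega_2$ (transitivity of $\sqsubseteq$ keeps $M_{i+1}\sqsubseteq M_0$, so every $M_i$ meets $\omega_1$ exactly in $\delta$), and at a limit stage $\lambda<\omega_1$ take the union, which is again a countable elementary substructure by Tarski--Vaught and still meets $\omega_1$ in $\delta$. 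Put $X:=\bigcup_{i<\omega_1}M_i$. Then $X\prec(H_\theta,\in,\Delta)$, $X\cap\omega_1=\delta\in\omega_1$, and since $\langle M_i\cap\omega_2 : i<\omega_1\rangle$ is $\subseteq$-increasing and strictly increases at each successor step (so that, by Remark~\ref{rem_EndExtension}, the ordinals $\sup(M_i\cap\omega_2)$ strictly increase), we get $|X\cap\omega_2|=\omega_1$. Hence $X$ is a Chang structure with $M\sqsubseteq X$ and $M\subsetneq X$ (as $|X|=\omega_1>|M|$), which is exactly what (2) demands.

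For $(2)\Rightarrow(1)$, fix $\theta$, $\Delta$, and countable $M\prec(H_\theta,\in,\Delta)$, and set $\delta:=M\cap\omega_1$. Apply (2) to obtain a Chang $X\prec(H_\theta,\in,\Delta)$ with $M\sqsubset X$; in particular $M\subseteq X$ and $X\cap\omega_1=\delta$, so $(X\cap\omega_2)\cap\omega_1=\delta\subseteq M$. Since $M$ is countable but $|X\cap\omega_2|=\omega_1$, choose $\gamma\in(X\cap\omega_2)\setminus M$ (necessarily $\gamma\ge\omega_1$), and let $M':=\text{Hull}^{(H_\theta,\in,\Delta)}(M\cup\{\gamma\})$, a countable elementary substructure of $(H_\theta,\in,\Delta)$ with $M\cup\{\gamma\}\subseteq M'$, so in particular $M\cap\omega_2\subsetneq M'\cap\omega_2$. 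The crux is that $M'\cap\omega_1=\delta$: by Fact~\ref{fact_Hulls} applied with $\eta=\omega_2\in M$ and $Y=\{\gamma\}$, every element of $M'$ either lies in $M$ or has the form $f(\gamma)$ for some $f\in M$ with $f\colon\omega_2\to H_\theta$; and if such an $f(\gamma)$ is a countable ordinal, then replacing $f$ by the ($\Delta$-definable-from-$f$, hence in $M$) function $g\colon\omega_2\to\omega_1$ sending $\xi$ to $f(\xi)$ when $f(\xi)\in\omega_1$ and to $0$ otherwise, we have $g\in M\subseteq X$ and $\gamma\in X$, so by elementarity of $X$ and $X\cap\omega_1=\delta$ we conclude $f(\gamma)=g(\gamma)\in\delta$. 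Thus $M\sqsubseteq M'\prec(H_\theta,\in,\Delta)$ with $M'\cap\omega_2\supsetneq M\cap\omega_2$, witnessing SCC at $(\theta,\Delta,M)$; as $\theta$ and $\Delta$ were arbitrary (with $\theta$ sufficiently large), SCC follows.

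The main obstacle — indeed the only point that is not routine bookkeeping with $\sqsubseteq$ and continuous chains — is the verification that adjoining the single new ordinal $\gamma$ to $M$ and closing under Skolem functions does not introduce new countable ordinals. This is precisely where the Chang structure $X$ and the "projection to $\omega_1$" trick ($g$ from $f$) are used, and it is what allows the whole argument to stay inside $(H_\theta,\in,\Delta)$ without passing to a richer structure.
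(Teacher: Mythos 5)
Your proof is correct, and it is essentially the argument the paper leaves implicit: the $(1)\Rightarrow(2)$ direction is the same $\omega_1$-length $\sqsubseteq$-chain construction used in the paper's proof that $\text{SCC}^{\text{cof}}$ implies $\text{SCC}^{\text{split}}$, and the $(2)\Rightarrow(1)$ direction is the standard hull-of-one-new-ordinal argument via Fact~\ref{fact_Hulls}, with the projection $g$ of $f$ to $\omega_1$ doing exactly the needed work to keep $\text{Hull}(M\cup\{\gamma\})\cap\omega_1 = M\cap\omega_1$. (Only a cosmetic slip: in the chain construction you want $M_0 \sqsubseteq M_{i+1}$, not $M_{i+1}\sqsubseteq M_0$.)
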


If ``all countable $M$" in the second clause of Lemma \ref{lem_SCC_Equiv} is replaced by ``stationarily many countable $M$", the result is a characterization of the classic Chang's Conjecture.

\section{Proof of Theorem \ref{thm_CoxSakai}}\label{sec_ProofCoxSakai}

In this section we prove Theorem \ref{thm_CoxSakai}.  We believe it is conceptually simpler to essentially separate items \ref{item_SCCsplit} through \ref{item_SomeGitikLike} from items \ref{item_CoxGame} and \ref{item_ParticularStrategy}.  This results in one redundant step.  Specifically, we prove that
\[
\ref{item_SCCsplit} \implies \ref{item_GitikPoset} \implies \ref{item_SomeGitikLike} \implies \ref{item_SCCsplit},
\]
and then deal with the characterizations involving games, specifically
\[
\ref{item_SCCsplit} \implies \ref{item_ParticularStrategy} \implies \ref{item_CoxGame} \implies \ref{item_SCCsplit}.
\]

\subsection{\ref{item_SCCsplit} $ \implies $ \ref{item_GitikPoset}}\label{sec_HiroshiDirection}

Assume $\text{SCC}^{\text{split}}$, and fix a sufficiently large regular $\theta$ and a wellorder $\Delta$ on $H_\theta$.  Fix any countable $M \prec (H_\theta,\in,\Delta)$, we recursively define a binary tree $T_M$ of height $\omega$ isomorphic to the tree $2^{<\omega}$ as follows.  Set $M_{\langle \rangle}:=M$, and given $s \in {}^{<\omega} 2$, use $\text{SCC}^{\text{split}}$ to find countable $M_{s^\frown 0}$ and $M_{s^\frown 1}$ that both $\sqsubseteq$-extend $M_s$, are elementary in $(H_\theta,\in,\Delta)$, and are $\subseteq$-incomparable below $\omega_2$.

\begin{globalClaim}\label{clm_BranchNotInV}
If $W$ is an outer model of $V$ and $\sigma \in {}^\omega 2 \cap W \setminus V$, then 
\[
\omega_2^V \cap \bigcup_{n \in \omega} M_{\sigma \restriction n} \notin V.
\]
\end{globalClaim}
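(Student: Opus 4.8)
The idea is to argue by contradiction: suppose the branch union $Y := \omega_2^V \cap \bigcup_{n} M_{\sigma \restriction n}$ were in $V$, and derive that $\sigma$ itself is in $V$, contradicting $\sigma \notin V$. The key point is that the tree $T_M$ is built so that at each node $s$, the two successor models $M_{s^\frown 0}$ and $M_{s^\frown 1}$ have $\subseteq$-incomparable intersections with $\omega_2$, and each $M_{s^\frown i}$ is a $\sqsubseteq$-extension of $M_s$. By Remark \ref{rem_EndExtension}, $\sqsubseteq$ among elementary submodels of $(H_\theta,\in,\Delta)$ forces the $\omega_2$-parts to be related by end-extension; so along any branch the models' $\omega_2$-parts are $\subseteq$-increasing, and $Y$ is their union.

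\textbf{Recovering the branch from $Y$.} First I would observe that the whole tree construction $\langle M_s : s \in {}^{<\omega}2\rangle$ takes place in $V$ (it uses only $\text{SCC}^{\text{split}}$, the wellorder $\Delta$ to make canonical choices, and $M$), so this sequence is an element of $V$. Now work in the outer model $W$ and suppose toward a contradiction that $Y \in V$. I claim $\sigma$ can be computed in $V$ from $Y$ and the sequence $\langle M_s\rangle_s$ by recursion on $n$: having determined $\sigma \restriction n$, hence the node $s := \sigma \restriction n$ and the model $M_s$, note that $\bigcup_{n} M_{\sigma\restriction n} \cap \omega_2 \supseteq M_{s^\frown \sigma(n)} \cap \omega_2$, and conversely $Y$ is an increasing union of the $M_{\sigma\restriction k}\cap\omega_2$; the incomparability of $M_{s^\frown 0}\cap\omega_2$ and $M_{s^\frown 1}\cap\omega_2$ is what lets us read off $\sigma(n)$. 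Concretely: exactly one of the two candidate successors, say $M_{s^\frown i}$, satisfies $M_{s^\frown i}\cap\omega_2 \subseteq Y$ while the other does not — because if both were $\subseteq Y$, then... here is where I must be careful.

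\textbf{The main obstacle.} The genuine difficulty is that knowing $M_{s^\frown i}\cap\omega_2 \subseteq Y$ is not by itself enough to single out $i = \sigma(n)$; a priori the union $Y$ along the true branch could happen to swallow $M_{s^\frown(1-\sigma(n))}\cap\omega_2$ too, via some later model $M_{\sigma\restriction k}$ with $k > n$. To rule this out I would use the end-extension structure more carefully. Along the true branch, $Y \cap \omega_2$ is an increasing union of initial segments (by Remark \ref{rem_EndExtension}, each $M_{\sigma\restriction k}\cap\omega_2$ is an initial segment of $M_{\sigma\restriction(k+1)}\cap\omega_2$), so $Y\cap\omega_2$ is an initial segment of each $M_{\sigma\restriction k}\cap\omega_2$ extended, hence $Y \cap \omega_2 = \bigcup_k (M_{\sigma\restriction k}\cap\omega_2)$ is itself an increasing union of initial segments of the successor models. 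The incomparability condition says $M_{s^\frown 0}\cap\omega_2$ and $M_{s^\frown 1}\cap\omega_2$ each contains an ordinal not in the other; since both are initial segments of $Y\cap\omega_2$ only if one is an initial segment of the other, at most one can be $\subseteq Y\cap\omega_2$ while being an initial segment — but actually both are initial segments of $Y$ only if they are comparable, contradiction. Hence at each level exactly one successor model's $\omega_2$-part is $\subseteq Y$; that determines $\sigma(n)$, and recursion gives $\sigma\in V$, the desired contradiction. The delicate part to get exactly right is the interplay between "$M_{s^\frown i}\cap\omega_2$ is an initial segment of $Y\cap\omega_2$" (which holds for the true successor) versus merely "$M_{s^\frown i}\cap\omega_2\subseteq Y\cap\omega_2$" (which could a priori hold for the wrong successor as a non-initial-segment subset) — so I would phrase the recovery using the initial-segment relation, not plain inclusion, leaning on Remark \ref{rem_EndExtension} to guarantee that both models $\sqsubseteq$-extending a common $M_s$ are initial segments of any common $\sqsubseteq$-extension, and in particular the true successor is an initial segment of $Y$ while the false one, being incomparable with the true one, cannot also be an initial segment of $Y$.
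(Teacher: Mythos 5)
Your proposal is correct and follows essentially the same route as the paper: assume $Y\in V$ for contradiction and decode $\sigma$ level by level, using the $\subseteq$-incomparability of sibling models together with the end-extension along branches (Remark \ref{rem_EndExtension}) to show that at each node exactly one successor's $\omega_2$-part is an initial segment of $Y$. Your care in using the initial-segment relation rather than plain inclusion is exactly the point the paper's own proof makes precise in its footnote.
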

\begin{proof}
(of Claim \ref{clm_BranchNotInV}):  Suppose toward a contradiction that $z:=\omega_2^V \cap \bigcup_{n \in \omega} M_{\sigma \restriction n}$ is an element of $V$; we will use $z$ and $T_M$ to define $\sigma$ in $V$, yielding a contradiction.  In $W$, set 
\[
b_\sigma:= \langle M_{\sigma \restriction n} \ : \ n < \omega \rangle.
\]

To decode $\sigma$ from $z$ and $T_M$, consider first the root $M = M_{\langle \rangle}$ of $T_M$, and its two immediate successors, $M_{\langle 0 \rangle}$ and $M_{\langle 1 \rangle}$.  Now:
\begin{itemize}
 \item $M_{\langle 0 \rangle} \cap \omega_2 \setminus M_{\langle 1 \rangle} \cap \omega_2$ and  $M_{\langle 1 \rangle} \cap \omega_2 \setminus M_{\langle 0 \rangle} \cap \omega_2$ are both nonempty;
 \item Models in $T_M$ along a given branch end-extend each other below $\omega_2^V$; and
 \item $z$ is the intersection of $\omega_2^V$ with the (union of) the branch $b_\sigma$.
\end{itemize}
It follows that $z$ contains, as an initial segment, \emph{exactly one} element of the set $\{M_{\langle 0 \rangle} \cap \omega^V_2, M_{\langle 1 \rangle} \cap \omega^V_2 \}$, and moreover that the one it contains corresponds to the model at level 1 of the branch $b_\sigma$.  This tells us which node at level 1 of $T_M$ is in $b_\sigma$.  Continuing in this manner allows one to reconstruct $b_\sigma$, and hence $\sigma$, from $z$ and $T_M$.  Hence $\sigma \in V$, a contradiction.\footnote{More precisely, using the contradiction assumption that $z \in V$, working $V$ we can recursively define the function $r:\omega \to 2$ by letting $r(n)$ be the unique node immediately above $M_{r \restriction n}$ in $T_M$ whose intersection with $\omega_2$ is an initial segment of $z$, if such a node exists.  Then in $W$, it is routine to recursively check that $z$ contains, as an initial segment, exactly one element of the set $\{ M_{(r \restriction n)^\frown 0} \cap \omega_2^V, M_{(r \restriction n)^\frown 1} \cap \omega_2^V\}$, and moreover that the one it contains is the model at level $n+1$ of $b_\sigma$.  Hence $r= \sigma$, and so $\sigma \in V$, a contradiction.}
\end{proof}

Now suppose $(s,\dot{f})$ is a condition in $M \cap \mathbb{Q}_{\text{FK}}$; we need to find an $(M,\mathbb{Q}_{\text{FK}})$-semimaster condition below $(s,\dot{f})$.  As proved at the beginning of Section 5.1 of \cite{Cox_Nonreasonable}, it suffices to show that if $\sigma$ is $\text{Add}(\omega)$-generic with $s \in \sigma$, then $V[\sigma] \models$ ``there exists an $N \prec (H_\theta[\sigma],\in)$ such that $M[\sigma] \sqsubseteq N$ and $N \cap \omega_2^V \notin V$".  We claim that $N:=M_\omega[\sigma]$ works, where 
\[
M_\omega:= \bigcup_{n < \omega} M_{\sigma \restriction n}.
\]

Clearly $M[\sigma] \subseteq M_\omega[\sigma]$, since $M$ is the root of the tree $T_M$.  Also, it is a standard fact that for every $n < \omega$, $M_{\sigma \restriction n}[\sigma]$ is an elementary substructure of $(H_\theta[\sigma],\in)$.  Now
\begin{equation}\label{eq_GenericUnion}
M_\omega[\sigma] = \bigcup_{n < \omega} M_{\sigma \restriction n} [\sigma]
\end{equation}
and it follows that $M_\omega[\sigma]$ is also elementary in $(H_\theta[\sigma], \in)$.  By Claim \ref{clm_BranchNotInV}, $\omega^V_2  \cap M_\omega \notin V$.  Also, because every model in $T_M$ is a $\sqsubseteq$-extension of $M$, 
\[
V[\sigma] \models \ M \cap \omega_1 = M_\omega \cap \omega_1.
\]
The following claim completes the proof:
\begin{globalClaim}\label{clm_SigmaAddsNoORDS}
$V[\sigma] \models M_\omega \cap \omega_1 = M_\omega[\sigma] \cap \omega_1$.  (In fact we prove they have the same intersection with $\text{ORD}$).
\end{globalClaim}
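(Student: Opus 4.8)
The plan is to use that $\text{Add}(\omega)$ is a \emph{countable} forcing, which forces every dense subset of it that an elementary submodel can define to be countable from that submodel's point of view. Work in $V[\sigma]$. The inclusion $M_\omega \cap \text{ORD} \subseteq M_\omega[\sigma] \cap \text{ORD}$ is immediate via check-names, so the content of the claim is the reverse inclusion $M_\omega[\sigma] \cap \text{ORD} \subseteq M_\omega$.

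To prove it, fix $\xi \in M_\omega[\sigma] \cap \text{ORD}$; by the definition of $M_\omega[\sigma]$ (and equation (\ref{eq_GenericUnion})) there is an $\text{Add}(\omega)$-name $\dot\xi \in M_\omega$ with $\dot\xi^\sigma = \xi$. Let $D$ be the set of conditions $q$ that either force $\dot\xi = \check\alpha$ for some ordinal $\alpha$, or force $\dot\xi \notin \text{ORD}$. A routine density argument --- given a condition $r$, either $r$ already forces $\dot\xi \notin \text{ORD}$, or some extension of $r$ forces $\dot\xi$ to equal a specific ordinal --- shows $D$ is dense, and $D$ is definable in $(H_\theta,\in,\Delta)$ from the parameter $\dot\xi \in M_\omega$, so $D \in M_\omega$ by elementarity. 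Since $(H_\theta,\in,\Delta)$ correctly sees that $D \subseteq \text{Add}(\omega)$ is countable, so does $M_\omega$; fixing a surjection $h: \omega \to D$ in $M_\omega$ and recalling $\omega \subseteq M_\omega$, we conclude $D \subseteq M_\omega$.

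Next let $g$ be the function with domain $D$ sending $q$ to the unique ordinal $\alpha$ with $q \Vdash \dot\xi = \check\alpha$ when such $\alpha$ exists (and to $0$, say, otherwise). Then $g$ is again definable from $\dot\xi$, so $g \in M_\omega$, and hence $g[D] \subseteq M_\omega$ since $D \subseteq M_\omega$. Finally, genericity of $\sigma$ provides a condition $q \in D$ with $q \subseteq \sigma$; as $\xi = \dot\xi^\sigma$ is an ordinal, $q$ cannot force $\dot\xi \notin \text{ORD}$, so $q \Vdash \dot\xi = \check\alpha$ with $\alpha = \dot\xi^\sigma = \xi$, whence $\xi = g(q) \in g[D] \subseteq M_\omega$. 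This establishes $M_\omega[\sigma] \cap \text{ORD} \subseteq M_\omega$, and restricting to $\omega_1$ gives the stated equality $M_\omega \cap \omega_1 = M_\omega[\sigma] \cap \omega_1$ in $V[\sigma]$.

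I do not expect a genuine obstacle here: this is the standard ``values of names are computed by small definable dense sets, so they stay in the model'' argument. The only points that deserve a moment's care are (i) that it applies to $M_\omega$ even though $M_\omega$ is merely an increasing union of the countable models $M_{\sigma \restriction n}$ and is itself defined in $V[\sigma]$ rather than in $V$ --- but all that is used is $M_\omega \prec (H_\theta,\in,\Delta)$, $\omega \subseteq M_\omega$, and closure of $M_\omega$ under definable functions; and (ii) the passage from ``$M_\omega$ thinks $D$ is countable'' to ``$D \subseteq M_\omega$'', which is exactly where countability of $\text{Add}(\omega)$ is used.
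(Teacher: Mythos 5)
Your proof is correct, and there are no hidden obstacles: the density of the set $D$ of deciding conditions, its definability from $\dot\xi$ over $(H_\theta,\in,\Delta)$, and the absoluteness of elementarity of $M_\omega$ in $(H_\theta^V,\in,\Delta)$ (as an increasing union of the $M_{\sigma\restriction n}$, even though the union is only formed in $V[\sigma]$) are all standard, and you correctly isolate the two points needing care. The route differs mildly from the paper's: the paper never argues about $M_\omega$ directly, but instead invokes the standard fact that for c.c.c.\ forcing the trivial condition is a master condition for every countable elementary submodel \emph{from $V$}, applies it to each ground-model level $M_{\sigma\restriction n}$ to get $M_{\sigma\restriction n}\cap\mathrm{ORD}=M_{\sigma\restriction n}[\sigma]\cap\mathrm{ORD}$, and then takes the union via \eqref{eq_GenericUnion}; you instead unfold that master-condition fact (decided values of ordinal names lie in the model because the relevant set of deciding conditions is countable and definable) and apply it directly to the union $M_\omega$, which exists only in $V[\sigma]$. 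The paper's levelwise organization buys brevity and sidesteps any worry about applying the argument to a model of the extension, and, since it only uses the c.c.c.\ master-condition fact, it makes visible what is needed when Cohen forcing is replaced by another first step (cf.\ the discussion after Claim \ref{clm_SigmaAddsNoORDS} in the concluding remarks); your version buys self-containedness, though as written it uses countability of $\mathrm{Add}(\omega)$ itself rather than just c.c.c.\ (to generalize, one would replace ``$D$ is countable'' by a countable maximal antichain of deciding conditions).
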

\begin{proof}
(of Claim \ref{clm_SigmaAddsNoORDS}):  Since $\sigma$ is generic for a c.c.c.\ forcing, $\sigma$ includes a master condition, namely $\emptyset$, for every countable elementary model from $V$.  In particular, $\sigma$ includes a master condition for $M_{\sigma \restriction n}$ for every $n < \omega$, so
\[
\forall n < \omega \ M_{\sigma \restriction n} \cap \text{ORD} = M_{\sigma \restriction n}[\sigma] \cap \text{ORD}.
\]
Together with \eqref{eq_GenericUnion}, this completes the proof of the claim.
\end{proof}

\subsection{\ref{item_GitikPoset} $\implies$ \ref{item_SomeGitikLike}}

This direction is trivial, since the Friedman-Krueger poset clearly kills the stationarity of $([\omega_2]^\omega)^V$.

\subsection{\ref{item_SomeGitikLike} $\implies$ \ref{item_SCCsplit}}

Assume $\mathbb{P}$ is a semiproper poset that kills stationarity of $([\omega_2]^\omega)^V$.  First we claim:
\begin{globalClaim}\label{clm_SCC}
SCC holds.
\end{globalClaim}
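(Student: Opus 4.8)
\emph{The plan.}
The plan is to reduce the claim to producing, over a club of countable models, a single proper $\sqsubseteq$-extension that gains a new ordinal below $\omega_2$, and to manufacture that ordinal by forcing with $\mathbb{P}$ and then pulling back to $V$ by absoluteness. First fix a regular $\theta$ large enough that $\mathbb{P}\in H_\theta$ and $H_\theta^{V[G]}=(H_\theta^V)[G]$ for $\mathbb{P}$-generic $G$, fix a wellorder $\Delta$ of $H_\theta$, and fix a $\mathbb{P}$-name $\dot C\in H_\theta^V$ such that $\Vdash_{\mathbb{P}}$ ``$\dot C$ is a club subset of $[\omega_2^V]^\omega$ disjoint from $\big([\omega_2^V]^\omega\big)^V$''; such a $\dot C$ exists exactly because $\mathbb{P}$ kills the stationarity of $([\omega_2]^\omega)^V$. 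By the analogue for $\text{SCC}$ of Lemma~\ref{lem_FailsOnStatSet} (proved as in \cite{Cox_Nonreasonable}, Lemma~13), $\text{SCC}$ is equivalent to the assertion that club-many countable $M\prec(H_\theta,\in,\Delta)$ admit a countable $M'$ with $M\sqsubseteq M'\prec(H_\theta,\in,\Delta)$ and $M'\cap\omega_2\supsetneq M\cap\omega_2$; so it suffices to verify this extension property for the club of such $M$ that contain $\mathbb{P}$ and $\dot C$.

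\emph{The main step.}
Fix such an $M$. Since $\mathbb{P},\dot C\in M$ we have $M\prec(H_\theta,\in,\mathbb{P})$, so semiproperness of $\mathbb{P}$ yields an $(M,\mathbb{P})$-semimaster condition; let $G$ be $\mathbb{P}$-generic through it, so that $M\sqsubseteq M[G]\prec(H_\theta^{V[G]},\in)$ and $\omega_1^{V[G]}=\omega_1^V$. In $V[G]$, $C:=\dot C_G\in M[G]$ is a club on $[\omega_2^V]^\omega$ disjoint from $([\omega_2^V]^\omega)^V$; since $C$ and $\omega_2^V$ belong to the countable elementary submodel $M[G]$, the standard fact about clubs on $[\lambda]^\omega$ gives $M[G]\cap\omega_2^V\in C$, and hence $M[G]\cap\omega_2^V\notin V$ (it is countable in $V[G]$, so if it lay in $V$ it would be countable there, $\omega_1$ being preserved, and thus lie in $([\omega_2^V]^\omega)^V$). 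As $M\cap\omega_2\in V$ and $M\cap\omega_2\subseteq M[G]\cap\omega_2^V$, we may pick $\gamma\in(M[G]\cap\omega_2^V)\setminus M$. The key observation is that $\gamma<\omega_2^V$ is an ordinal, hence already lies in $V$; so in $V$ we may form $M':=\text{Hull}^{(H_\theta,\in,\Delta)}(M\cup\{\gamma\})$, a countable elementary submodel of $(H_\theta,\in,\Delta)$ belonging to $V$. By Fact~\ref{fact_Hulls}, every element of $M'\cap\omega_1$ either lies in $M$ or has the form $f(\gamma)$ for some $f\in M$ with $f\colon\omega_2\to\omega_1$; and for each such $f$ we have $f,\gamma\in M[G]\prec(H_\theta^{V[G]},\in)$, so $f(\gamma)\in M[G]\cap\omega_1=M\cap\omega_1$ (using $M\sqsubseteq M[G]$). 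Therefore $M'\cap\omega_1=M\cap\omega_1$, i.e.\ $M\sqsubseteq M'$, while $\gamma\in(M'\cap\omega_2)\setminus M$ gives $M'\cap\omega_2\supsetneq M\cap\omega_2$. Since $M,M'\in V$ and both relations are absolute, $M'$ witnesses the extension property for $M$ in $V$; as $M$ ranged over a club, $\text{SCC}$ follows.

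\emph{The main obstacle.}
The hard part is precisely this transfer back to $V$: the extension $M[G]$ and the witness $\gamma$ are produced in $V[G]$, and the point is to recognize that $M':=\text{Hull}^{(H_\theta,\in,\Delta)}(M\cup\{\gamma\})$ is actually a ground-model object --- which works only because $\gamma$ is an ordinal below $\omega_2^V$ and hence already in $V$, while the Skolem hull is computed absolutely from $\Delta$, and because ``$M\sqsubseteq M'$'' and ``$M'\cap\omega_2\supsetneq M\cap\omega_2$'' are absolute. Secondary points to check are the passage from ``every $M$'' in the definition of $\text{SCC}$ to ``club-many $M$'' (the analogue of Lemma~\ref{lem_FailsOnStatSet}), and that $\theta$ is taken large enough to absorb $\mathbb{P}$ and $\dot C$ and so that $H_\theta$ behaves well under $\mathbb{P}$; note that $\mathbb{P}$ itself need not lie in $H_{\omega_3}$, which is why we work with $(H_\theta,\in,\Delta)$ rather than $(H_{\omega_3},\in)$ as in Lemma~\ref{lem_FailsOnStatSet}.
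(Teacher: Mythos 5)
Your proof is correct and follows essentially the same route as the paper's (which cites Theorem 22 of \cite{Cox_Nonreasonable}): use semiproperness to get $M \sqsubseteq M[G]$, observe that $M[G]\cap\omega_2^V\notin V$ because the stationarity-killing witness lies in $M$, pick a new ordinal $\gamma$ below $\omega_2^V$, and note that $\text{Hull}^{(H_\theta,\in,\Delta)}(M\cup\{\gamma\})$ is a ground-model $\sqsubseteq$-extension of $M$ since it sits inside $M[G]$. The only differences are cosmetic: you work with a name for the club itself rather than a name for a function $[\omega_2^V]^{<\omega}\to\omega_2^V$, and you make explicit (via an SCC-analogue of Lemma \ref{lem_FailsOnStatSet}) the reduction to models containing the name, which the paper handles with a ``WLOG''.
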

\begin{proof}
(of Claim \ref{clm_SCC}):  this is just Theorem 22 of Cox~\cite{Cox_Nonreasonable}, but we provide a brief sketch (please refer to \cite{Cox_Nonreasonable} for more details).  Let $M \prec (H_\theta,\in,\Delta)$ be countable.  Since $([\omega_2]^\omega)^V$ is nonstationary in $V^{\mathbb{P}}$, and assuming WLOG that $M$ includes a name for a function from $[\omega_2^V]^{<\omega} \to \omega_2^V$ witnessing this fact, it follows that $M[G] \cap \omega_2^V \notin V$ whenever $G$ is $\mathbb{P}$-generic; in particular $M[G] \cap \omega_2^V \ne M \cap \omega_2^V$.  Pick any $\beta$ in the difference; then
\[
M \subsetneq \text{Hull}^{(H^V_\theta, \in,\Delta)}(M \cup \{ \beta \}) \subseteq \ M[G]
\]
Since $\mathbb{P}$ is semiproper then we can choose $G$ so that it includes an $(M,\mathbb{P})$-semimaster condition, so that $M \sqsubseteq M[G]$.  It follows that, in $V$, $\text{Hull}^{(H^V_\theta, \in,\Delta)}(M \cup \{ \beta \})$ is a $\sqsubset$-extension of $M$ that includes the ``new" ordinal $\beta$. 
\end{proof}

So \ref{item_SomeGitikLike} implies SCC, but we want $\text{SCC}^{\text{split}}$.  Now assume toward a contradiction that $\text{SCC}^{\text{split}}$ fails.  By Lemma \ref{lem_FailsOnStatSet}, it fails for stationarily many elements of $[H_\theta]^\omega$; let $S$ denote this stationary set.  For each $M \in S$, use Claim \ref{clm_SCC} and Lemma \ref{lem_SCC_Equiv} to choose a Chang set $X_M$ such that $M \sqsubset X_M \prec (H_\theta,\in,\Delta)$.  Below, ``Hull" refers to the Skolem hull in the structure $(H_\theta,\in,\Delta)$.

\begin{globalClaim}\label{clm_UniqueChang}
Suppose $M \in S$.  Then whenever $Y$ is a Chang $\sqsubseteq$-extension of $M$ that is elementary in $(H_\theta,\in,\Delta)$, $X_M \cap \omega_2 = Y \cap \omega_2$.  
\end{globalClaim}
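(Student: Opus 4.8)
The plan is to argue by contradiction. Assuming $M\in S$ but $X_M\cap\omega_2\neq Y\cap\omega_2$ for some Chang $\sqsubseteq$-extension $Y$ of $M$ elementary in $(H_\theta,\in,\Delta)$, I will manufacture two \emph{countable} $\sqsubseteq$-extensions of $M$, each elementary in $(H_\theta,\in,\Delta)$, whose intersections with $\omega_2$ are $\subseteq$-incomparable; this contradicts the failure of $\text{SCC}^{\text{split}}$ at $M$, which is exactly what $M\in S$ records.

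The engine of the argument is the following observation: if $Z$ is a Chang $\sqsubseteq$-extension of $M$ elementary in $(H_\theta,\in,\Delta)$ and $\xi\in Z\cap\omega_2$, then $N:=\text{Hull}(M\cup\{\xi\})$ is a countable $\sqsubseteq$-extension of $M$ with $N\subseteq Z$. Indeed $N$ is countable; since $M\cup\{\xi\}\subseteq Z\prec(H_\theta,\in,\Delta)$ and $Z$ is closed under the Skolem functions, $N\subseteq Z$; and finally $M\cap\omega_1\subseteq N\cap\omega_1\subseteq Z\cap\omega_1=M\cap\omega_1$, the last equality because $Z$ is a Chang set $\sqsubseteq$-extending $M$. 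Hence $N\cap\omega_1=M\cap\omega_1$, i.e.\ $M\sqsubseteq N$.

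Now fix $\beta$ lying in exactly one of $X_M\cap\omega_2$ and $Y\cap\omega_2$. By the symmetry between $X_M$ and $Y$ (both are Chang $\sqsubseteq$-extensions of $M$, elementary in $(H_\theta,\in,\Delta)$), assume $\beta\in(Y\cap\omega_2)\setminus(X_M\cap\omega_2)$. Let $N_0:=\text{Hull}(M\cup\{\beta\})$, a countable $\sqsubseteq$-extension of $M$ with $N_0\subseteq Y$ by the observation. Because $X_M$ is a Chang set, $X_M\cap\omega_2$ is uncountable while $N_0\cap\omega_2$ is countable, so I may fix $\gamma\in(X_M\cap\omega_2)\setminus(N_0\cap\omega_2)$. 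Let $N_1:=\text{Hull}(M\cup\{\gamma\})$, a countable $\sqsubseteq$-extension of $M$ with $N_1\subseteq X_M$. Then $\beta\in N_0\cap\omega_2$ but $\beta\notin N_1$ (since $N_1\subseteq X_M$ and $\beta\notin X_M$), and $\gamma\in N_1\cap\omega_2$ but $\gamma\notin N_0$; so $N_0\cap\omega_2$ and $N_1\cap\omega_2$ are $\subseteq$-incomparable, contradicting $M\in S$.

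The only mildly delicate point is the observation in the second paragraph, namely that forming the Skolem hull over a single new ordinal does not enlarge the intersection with $\omega_1$. This genuinely uses that $Z$ is a \emph{Chang} $\sqsubseteq$-extension of $M$, not merely a proper one; everything else is routine bookkeeping once that is in place.
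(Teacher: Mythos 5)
Your proof is correct and follows essentially the same route as the paper: assume incomparability of the two Chang traces fails, hull $M$ together with a single ordinal from the symmetric difference inside one Chang extension, use uncountability of the other Chang extension to find a second ordinal outside that countable hull, and derive two $\subseteq$-incomparable countable $\sqsubseteq$-extensions contradicting $M\in S$. The only cosmetic difference is that you isolate explicitly (and correctly) the observation that $\text{Hull}(M\cup\{\xi\})$ stays below $M\cap\omega_1$ because it sits inside a Chang $\sqsubseteq$-extension, a point the paper leaves implicit.
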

\begin{proof}
(of Claim \ref{clm_UniqueChang}):  we prove that $X_M \cap \omega_2 \subseteq Y \cap \omega_2$; the other direction is similar.  Suppose toward a contradiction that there is some $\beta \in (X_M \cap \omega_2) \setminus (Y \cap \omega_2)$.  Then $M(\beta):=\text{Hull}(M \cup \{ \beta \}) \subseteq X_M$ and is a countable $\sqsubseteq$ extension of $M$.  In particular, since $Y \cap \omega_2$ is uncountable, $Y \nsubseteq M(\beta)$.  Fix any $\eta \in Y \setminus M(\beta)$.  Then $M(\eta):=\text{Hull}(M \cup \{ \eta \}) \subset Y$ and is a countable $\sqsubseteq$ extension of $M$.  Finally, note that $\beta \notin M(\eta)$ and $\eta \notin M(\beta)$; so $M(\beta) \cap \omega_2$ and $M(\eta) \cap \omega_2$ are $\subseteq$-incomparable.  This contradicts that $M \in S$.
\end{proof}

\begin{globalClaim}\label{clm_ExtOfMContainedXM}
If $M \in S$, then whenever $Q \prec (H_\theta,\in,\Delta)$ is countable and $Q \sqsupseteq M$, $Q \cap \omega_2$ is an initial segment of $X_M \cap \omega_2$.
\end{globalClaim}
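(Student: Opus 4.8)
The plan is to establish two facts about $Q$: first that $Q\cap\omega_2\subseteq X_M\cap\omega_2$, and second that this subset is downward closed inside $X_M\cap\omega_2$; together these say exactly that $Q\cap\omega_2$ is an initial segment of $X_M\cap\omega_2$. Throughout, $\text{Hull}$ denotes the Skolem hull computed in $(H_\theta,\in,\Delta)$, which makes sense because $\Delta$ is a wellorder and hence equips the structure with definable Skolem functions; in particular every elementary substructure is closed under them, so $\text{Hull}(A)\subseteq N$ whenever $A\subseteq N\prec(H_\theta,\in,\Delta)$. I will use repeatedly that $M\sqsubset X_M$, so that $X_M\cap\omega_1=M\cap\omega_1=Q\cap\omega_1$.

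For the inclusion I would argue by contradiction. Suppose some $\beta\in(Q\cap\omega_2)\setminus(X_M\cap\omega_2)$ exists; since $\beta\notin X_M\supseteq M\cap\omega_1=Q\cap\omega_1$, necessarily $\omega_1\le\beta<\omega_2$. Put $M(\beta):=\text{Hull}(M\cup\{\beta\})$. Because $M\cup\{\beta\}\subseteq Q$ and $Q$ is closed under the Skolem functions, $M(\beta)\subseteq Q$, hence $M(\beta)\cap\omega_1\subseteq Q\cap\omega_1=M\cap\omega_1$; so $M(\beta)$ is a countable elementary $\sqsubseteq$-extension of $M$ with $\beta\in M(\beta)\setminus X_M$. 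Since $X_M$ is a Chang set, $X_M\cap\omega_2$ is uncountable, so I may pick $\eta\in(X_M\cap\omega_2)\setminus M(\beta)$ and set $M(\eta):=\text{Hull}(M\cup\{\eta\})\subseteq X_M$; the identical reasoning (now using $M\sqsubset X_M$) shows $M(\eta)$ is a countable elementary $\sqsubseteq$-extension of $M$. But then $\beta\in M(\beta)$, $\beta\notin M(\eta)$ (as $M(\eta)\subseteq X_M$), $\eta\in M(\eta)$, and $\eta\notin M(\beta)$, so $M(\beta)\cap\omega_2$ and $M(\eta)\cap\omega_2$ are $\subseteq$-incomparable — contradicting $M\in S$, i.e.\ the failure of $\text{SCC}^{\text{split}}$ at $M$.

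For downward closedness it is enough to show $X_M\cap\delta\subseteq Q$ for every $\delta\in Q\cap\omega_2$, since then any $\gamma\in X_M\cap\omega_2$ lying below such a $\delta$ belongs to $Q$. This is immediate when $\delta<\omega_1$. When $\omega_1\le\delta<\omega_2$, let $e$ be the $\Delta$-least bijection $\omega_1\to\delta$; by the inclusion already proved $\delta\in Q\cap X_M$, so $e\in Q\cap X_M$ by elementarity of both models. Using $M\sqsubset X_M$ one checks $X_M\cap\delta=\{e(\xi):\xi\in X_M\cap\omega_1\}=\{e(\xi):\xi\in M\cap\omega_1\}$, and since $e\in Q$ and $M\cap\omega_1=Q\cap\omega_1\subseteq Q$, every such $e(\xi)$ lies in $Q$; hence $X_M\cap\delta\subseteq Q$. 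The one genuinely substantive step is the inclusion: the key point is that $\text{Hull}(M\cup\{\beta\})$ is automatically a $\sqsubseteq$-extension of $M$ — not via any absoluteness argument about adjoining a single ordinal below $\omega_2$, but simply because it is trapped inside $Q$, which already shares its $\omega_1$ with $M$ — after which the Chang-ness of $X_M$ provides the room to pick $\eta$ outside $M(\beta)$ and thereby manufacture a genuine failure of $\text{SCC}^{\text{split}}$ at $M$, which is impossible.
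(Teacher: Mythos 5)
Your proof is correct, but for the substantive step (the inclusion $Q\cap\omega_2\subseteq X_M\cap\omega_2$) you take a different route than the paper. The paper first applies Claim \ref{clm_SCC} and Lemma \ref{lem_SCC_Equiv} \emph{to $Q$} to get a Chang $\sqsubseteq$-extension $Y\sqsupseteq Q$, observes that $Y$ is then also a Chang $\sqsubseteq$-extension of $M$, and cites Claim \ref{clm_UniqueChang} to conclude $Y\cap\omega_2=X_M\cap\omega_2$, which rules out $\beta\in(Q\cap\omega_2)\setminus(X_M\cap\omega_2)$ since $\beta\in Y$. You instead argue directly against $M\in S$: from such a $\beta$ you form $\text{Hull}(M\cup\{\beta\})\subseteq Q$, use the uncountability of $X_M\cap\omega_2$ to pick $\eta\in(X_M\cap\omega_2)\setminus\text{Hull}(M\cup\{\beta\})$, and form $\text{Hull}(M\cup\{\eta\})\subseteq X_M$; both are countable $\sqsubseteq$-extensions of $M$ (being trapped in $Q$ and $X_M$ respectively, which share $\omega_1$ with $M$), and they are $\subseteq$-incomparable below $\omega_2$, contradicting the failure of $\text{SCC}^{\text{split}}$ at $M$. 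This is essentially the splitting argument the paper had already packaged into Claim \ref{clm_UniqueChang}, now applied to the pair $(Q,X_M)$ rather than $(Y,X_M)$; what it buys you is self-containedness and the avoidance of a second appeal to SCC (no Chang extension of $Q$ is needed), at the cost of repeating an argument the paper reuses by citation. Your handling of the initial-segment part (the $\Delta$-least bijection $e:\omega_1\to\delta$) is likewise correct and simply redoes Remark \ref{rem_EndExtension} by hand, where the paper just invokes it.
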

\begin{proof}
(of Claim \ref{clm_ExtOfMContainedXM}): By Remark \ref{rem_EndExtension} it suffices to prove that $Q  \cap \omega_2 \subseteq X_M \cap \omega_2$.  Suppose toward a contradiction that $Q \cap \omega_2 \nsubseteq X_M \cap \omega_2$; fix some $\beta \in (Q \cap \omega_2) \setminus (X_M \cap \omega_2)$.  Claim \ref{clm_SCC} and Lemma \ref{lem_SCC_Equiv} ensure that there is a Chang $\sqsubseteq$-extension $Y$ of $Q$ that is elementary in $(H_\theta,\in,\Delta)$.  Since $M \sqsubseteq Q$, $Y$ is also a Chang extension of $M$.  But $\beta \in (Y \cap \omega_2) \setminus ( X_M \cap \omega_2)$, which contradicts Claim \ref{clm_UniqueChang}.
\end{proof}

Let $\dot{F}:[\omega_2^V]^{<\omega} \to \omega_2^V$ be a $\mathbb{P}$-name witnessing that $\mathbb{P}$ kills the stationarity of $([\omega_2]^\omega)^V$; so $\mathbb{P}$ forces that every countable set closed under $\dot{F}$ fails to be in the ground model.

Fix an $M \in S$ such that $\dot{F} \in M$.  Then
\[
\Vdash \ \check{M}[\dot{G}] \cap \omega_2^V \notin V.
\]
Since $\mathbb{P}$ is semiproper, there exists some $(M,\mathbb{P})$-semimaster condition $p$.  Let $G$ be generic with $p \in G$.  Then $M \sqsubseteq M[G]$ and $M[G] \cap \omega_2^V \notin V$.  In $V[G]$, let $N:=M[G]$.  Let $\eta$ be the \emph{least} ordinal $\le \omega_2^V$ such that $N \cap \eta \notin V$.\footnote{$\eta$ might equal $\omega_2^V$, e.g.\ if $\mathbb{P}$ is Namba forcing.}   

Consider any $\zeta < \eta$.  Then $N \cap \zeta \in V$, and hence $\text{Hull}(N \cap \zeta) \in V$ (here the hull is in $(H_\theta,\in,\Delta)$).  Then by Claim \ref{clm_ExtOfMContainedXM}, $\omega_2 \cap \text{Hull}(N \cap \zeta)$ is an initial segment of $X_M \cap \omega_2$.  It follows that
\[
N \cap \eta = \bigcup_{\zeta < \eta} \text{Hull}(N \cap \zeta) \text{ is an initial segment of } X_M \cap \omega_2.
\]
But $X_M \cap \omega_2 \in V$ and hence so are all of its initial segments.  So $N \cap \eta \in V$, a contradiction.

\subsection{\ref{item_SCCsplit} $\implies$ \ref{item_ParticularStrategy}}

Assume $\text{SCC}^{\text{split}}$, and at stage $n$ of the game $\mathcal{G}^{\text{split}}_{\mathfrak{A}}$, have player II play
\[
\omega_1 \cap \text{Hull}^{\mathfrak{A}}(F_0,F_1,\dots,F_n) .
\]
We claim this is a winning strategy for player II in $\mathcal{G}^{\text{split}}_{\mathfrak{A}}$.  Let 
\[
X:= \text{Hull}^{\mathfrak{A}}(\{ F_n \ : \ n \in \omega \})
\]
By $\text{SCC}^{\text{split}}$ there exist $\alpha,\beta < \omega_2$ such that $X(\alpha):=\text{Hull}^{\mathfrak{A}}(X \cup \{ \alpha \})$ and $X(\beta):=\text{Hull}^{\mathfrak{A}}(X \cup \{ \beta \})$ both $\sqsupset$-extend $X$, but are $\subseteq$-incomparable below $\omega_2$. 

\begin{globalClaim}\label{clm_Someh}
Let $X(\alpha,\beta):=\text{Hull}^{\mathfrak{A}}(X \cup \{ \alpha,\beta\})$.  Then $X(\alpha,\beta)  \cap \omega_1 > X \cap \omega_1$.
\end{globalClaim}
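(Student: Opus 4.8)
The plan is to argue by contradiction. Suppose $X(\alpha,\beta)\cap\omega_1 = X\cap\omega_1$, i.e.\ $X \sqsubseteq X(\alpha,\beta)$ (one inequality, $X\cap\omega_1 \le X(\alpha,\beta)\cap\omega_1$, always holds since $X \subseteq X(\alpha,\beta)$). Recall that we already have $X \sqsubseteq X(\alpha)$ and $X \sqsubseteq X(\beta)$, and that $X(\alpha), X(\beta) \subseteq X(\alpha,\beta)$ by monotonicity of the Skolem-hull operation; all of $X, X(\alpha), X(\beta), X(\alpha,\beta)$ are countable elementary substructures of $\mathfrak{A}$.

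First I would record that the argument of Remark~\ref{rem_EndExtension} transfers to elementary substructures of $\mathfrak{A}$: since $\mathfrak{A}$ is Skolemized and extends $(H_{\omega_3},\in)$, for any $\gamma \in \omega_2$ there is an $\mathfrak{A}$-definable (e.g.\ $\Delta$-least, after expanding $\mathfrak{A}$ by a definable wellorder if one likes) bijection $\omega_1 \to \gamma$, and the same computation as in Remark~\ref{rem_EndExtension} shows: whenever $P \sqsubseteq Q$ are elementary in $\mathfrak{A}$, the set $P\cap\omega_2$ is an initial segment of $Q\cap\omega_2$.

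Then, under the contradiction hypothesis, $X(\alpha)\cap\omega_1 = X\cap\omega_1 = X(\alpha,\beta)\cap\omega_1$ together with $X(\alpha)\subseteq X(\alpha,\beta)$ gives $X(\alpha)\sqsubseteq X(\alpha,\beta)$, and likewise $X(\beta) \sqsubseteq X(\alpha,\beta)$. By the preceding paragraph both $X(\alpha)\cap\omega_2$ and $X(\beta)\cap\omega_2$ are initial segments of the linearly ordered set $X(\alpha,\beta)\cap\omega_2$. But any two initial segments of a linearly ordered set are $\subseteq$-comparable, so $X(\alpha)\cap\omega_2$ and $X(\beta)\cap\omega_2$ are $\subseteq$-comparable below $\omega_2$, contradicting the incomparability guaranteed by the choice of $\alpha$ and $\beta$.

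The only step that needs care is the transfer of Remark~\ref{rem_EndExtension} to the abstract structure $\mathfrak{A}$ in place of $(H_\theta,\in,\Delta)$; this is routine, since definable Skolem functions suffice to produce the surjections used there, but it is worth stating explicitly so that the linear-order bookkeeping at the end is clean. Everything else — monotonicity of hulls, countability of the hulls, and the elementary fact that two initial segments of a linear order nest — is immediate, so I do not anticipate a serious obstacle.
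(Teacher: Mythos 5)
Your argument is correct and is essentially the paper's own proof: assume equality of the $\omega_1$-traces, get $X(\alpha)\sqsubseteq X(\alpha,\beta)$ and $X(\beta)\sqsubseteq X(\alpha,\beta)$, apply Remark~\ref{rem_EndExtension} to see both are initial segments of $X(\alpha,\beta)\cap\omega_2$, and contradict the $\subseteq$-incomparability of $X(\alpha)\cap\omega_2$ and $X(\beta)\cap\omega_2$. Your explicit note that Remark~\ref{rem_EndExtension} transfers to elementary substructures of the Skolemized structure $\mathfrak{A}$ is a harmless elaboration of a step the paper leaves implicit.
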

\begin{proof}
(of Claim \ref{clm_Someh}):  suppose toward a contradiction that they are equal.  Then
\[
X \sqsubseteq X(\alpha) \sqsubseteq X(\alpha,\beta) \text{ and } X \sqsubseteq X(\beta) \sqsubseteq X(\alpha,\beta).
\]
Then by Remark \ref{rem_EndExtension}, $X(\alpha,\beta) \cap \omega_2$ end-extends both $X(\alpha) \cap \omega_2$ and $X(\beta) \cap \omega_2$.  But this implies that one of $X(\alpha) \cap \omega_2$ and $X(\beta) \cap \omega_2$ is a subset of the other, contrary to our choice of $\alpha$ and $\beta$.
\end{proof}

By Claim \ref{clm_Someh} and Fact \ref{fact_Hulls}, there is some $h: \omega_2 \times \omega_2 \to \omega_1$ with $h \in X$ such that $h(\alpha, \beta) \ge X \cap \omega_1$.  But note by definition of $X$ that $h \in \text{Hull}^{\mathfrak{A}}(\{ F_n \ : \ n \in \omega \})$.  This takes care of the final requirement in the definition of ``II wins" in the game $\mathcal{G}^{\text{split}}_{\mathfrak{A}}$.  Note that since $F_n \in X$ for every $n$, and by the fact that $X \sqsubset X(\alpha)$ and $X \sqsubset X(\beta)$, we have the other requirements satisfied as well.  So player II wins the game.

\subsection{\ref{item_ParticularStrategy} $\implies$  \ref{item_CoxGame}}  This direction is trivial.

\subsection{\ref{item_CoxGame} $\implies$ \ref{item_SCCsplit}}\label{sec_FromWS}

Let $\Delta$ be a wellorder on $H_{\omega_3}$, let $\mathfrak{A}=(H_{\omega_3},\in,\Delta)$, and suppose Player II has a winning strategy in the game $\mathcal{G}^{\text{split}}_{\mathfrak{A}}$.  We want to prove that $\text{SCC}^{\text{split}}$ holds.  By Lemma \ref{lem_FailsOnStatSet}, it suffices to show that for club-many $M \in [H_{\omega_3}]^\omega$, there exist $\alpha,\beta < \omega_2$ such that 
\[
\omega_1 \cap \text{Hull}^{\mathfrak{A}}(M \cup \{ \alpha \}) = \omega_1 \cap \text{Hull}^{\mathfrak{A}}(M \cup \{ \beta \}) = \omega_1 \cap M
\]
but $\omega_1 \cap \text{Hull}^{\mathfrak{A}}(M \cup \{ \alpha,\beta\}) > \omega_1 \cap M$.  Note that this would imply in particular that $\alpha \notin \text{Hull}^{\mathfrak{A}}(M \cup \{ \beta \})$ and $\beta \notin \text{Hull}^{\mathfrak{A}}(M \cup \{ \alpha \})$.

We claim this is true for every countable $M$ that is elementary in the expanded structure $\mathfrak{A}^\frown \sigma$, where $\sigma$ is any winning strategy for Player II.\footnote{Note that $\sigma$ can be viewed as a predicate on $H_{\omega_3}$, which is the universe of $\mathfrak{A}$, so the expanded structure makes sense.}  Fix such an $M$, and let $\langle F_n \ : \ n \in \omega \rangle$ enumerate $M \cap {}^{\omega_2} \omega_1$.   Define a run of the game, where Player I plays the $F_n$'s, and Player II responds according to the strategy $\sigma$.  Let $\alpha,\beta$, and $h \in {}^{\omega_2 \times \omega_2} \omega_1 \cap  \text{Hull}^{\mathfrak{A}}(\{ F_n \ : \ n \in \omega \})$ be witnesses to the fact that II wins the game (as defined in clause \ref{item_CoxGame} of Theorem \ref{thm_CoxSakai}).

Note that since $M \prec \mathfrak{A}^\frown \sigma$ and Player I's moves are functions from $M$, the output of $\sigma$ at each stage $n$ of the game, which we will denote by $\delta_n$, is an element of $M \cap \omega_1$.  Hence
\[
M \cap \omega_1 \ge \delta_\omega=\text{sup}_n \delta_n.
\]
On the other hand, for every $\xi \in M \cap \omega_1$, there is some $n < \omega$ such that $F_{n}$ has constant value $\xi$, and hence, since $\alpha,\beta$ witness that Player II wins, in particular $\xi = F_n(\alpha) < \delta_\omega$.  So in fact
\begin{equation}\label{eq_M_not_extended}
M \cap \omega_1 = \delta_\omega= \text{sup}_n \delta_n
\end{equation}

Now since $\alpha,\beta,h$ witness that Player II wins the game, and since $\vec{F}$ enumerates exactly $M \cap {}^{\omega_2} \omega_1$, Fact \ref{fact_Hulls} ensures that $M$, $M(\alpha):=\text{Hull}^{\mathfrak{A}}(M \cup \{ \alpha \})$, and $ M(\beta):=\text{Hull}^{\mathfrak{A}}(M \cup \{ \beta \})$ all have the same intersection with $\omega_1$, namely $\delta_\omega$.  Here we emphasize that the hulls are taken in $\mathfrak{A}$, \textbf{not} in the expanded structure $\mathfrak{A}^\frown \sigma$.  On the other hand, $h \in \text{Hull}^{\mathfrak{A}}(\{ F_n \ : \ n \in \omega \}) \subseteq M$, and hence 
\begin{equation}\label{eq_h_in_M}
h \in M.
\end{equation}
Since $\alpha,\beta,h$ witness that Player II wins the game, $h(\alpha,\beta) \ge \delta_\omega$.  But since $h \in M$, this implies that $h(\alpha,\beta) \in \text{Hull}^{\mathfrak{A}}(M \cup \{ \alpha,\beta \})$, and hence the latter's intersection with $\omega_1$ is strictly larger than $M \cap \omega_1$.

\section{Concluding Remarks}\label{sec_ConcludingRemarks}

Consider the following implications discussed earlier (the last implication is the one due to Todorcevic~\cite{MR1261218}, mentioned in the introduction):
\begin{equation}\label{eq_ChainImp}
\text{SCC}^{\text{cof}} \implies \text{SCC}^{\text{split}} \implies \text{SCC} \implies \text{WRP}([\omega_2]^\omega).
\end{equation}
Usuba asked:
\begin{questionstar}[Usuba~\cite{MR3248209}, Question 3.14 part 4]\label{q_Usuba}
Is $\text{SCC}^{\text{cof}}$ equivalent to SCC?
\end{questionstar}
In light of our Theorem \ref{thm_CoxSakai}, Shelah's Theorem \ref{thm_Shelah}, and the implications in \eqref{eq_ChainImp}, a positive answer to Usuba's Question would imply that semiproperness of the Friedman-Krueger poset implies semiproperness of Namba forcing.  We conjecture this is false.  

On a related topic, Torres-Perez and Wu proved in \cite{MR3431031} that $\text{SCC}^{\text{cof}}$, together with failure of $\text{CH}$, implies the Tree Property at $\omega_2$.  They asked (Question 4.1 of \cite{MR3431031}) whether their assumption of $\text{SCC}^{\text{cof}}$ could be weakened to ($\neg \text{CH}$ plus) $\text{WRP}([\omega_2]^\omega)$.  In light of the implications in \eqref{eq_ChainImp}, it is also natural to ask if their assumption could be weakened to ($\neg \text{CH}$ plus) either $\text{SCC}^{\text{split}}$ or $\text{SCC}$.  We conjecture that their assumptions cannot be significantly weakened; i.e.\ that $\neg \text{CH}$ plus $\text{SCC}$ (and possibly even $\text{SCC}^{\text{split}}$) is consistent with an $\omega_2$-Aronszajn tree.

We also include a technical question.  The proof that $\text{SCC}^{\text{split}}$ implies semiproperness of the Friedman-Krueger poset made use of the fact that the first step of the Friedman-Krueger poset (i.e.\ Cohen forcing) is c.c.c.; this was used in the proof of Claim \ref{clm_SigmaAddsNoORDS}, to ensure that the generic real includes a master condition for every model along the generic branch $b_\sigma$ of the tree $T_M$.  More generally, the proof of Theorem \ref{thm_CoxSakai} shows that $\text{SCC}^{\text{split}}$ is equivalent to semiproperness for any poset of the form ``add a new real and then shoot a club through $[\omega_2^V]^\omega \setminus V$", \emph{provided that} the forcing to add the new real $\sigma$ can be arranged to include a semimaster condition for \emph{every} model along the branch $b_\sigma$ from the proof.  This raises the following:
\begin{question}
For which forcings of the form ``add a new real and then shoot a club through $[\omega_2^V]^\omega \setminus V$" is semiproperness equivalent to $\text{SCC}^{\text{split}}$?  

Note that by the equivalence of clauses \ref{item_SCCsplit} and \ref{item_SomeGitikLike} of Theorem \ref{thm_CoxSakai}, semiproperness of any such poset implies $\text{SCC}^{\text{split}}$.  But it's not clear if, for example, there exists a proper forcing $\mathbb{P}$ adding a new real, such that semiproperness of ``$\mathbb{P}$ followed by shooting a club through $[\omega_2^V]^\omega \setminus V$" is \textbf{strictly} stronger than $\text{SCC}^{\text{split}}$.
\end{question}

\begin{bibdiv}
\begin{biblist}
\bibselect{../../../Bibliography}
\end{biblist}
\end{bibdiv}

\end{document}